\newtheorem{theorem}{Theorem}[section]
\newtheorem{lemma}[theorem]{Lemma}
\theoremstyle{definition}
\numberwithin{equation}{section}
\newcommand{\R}{\mathbb{R}}
\newcommand{\N}{\mathbb{N}}
\newcommand{\I}{\mathbb{I}}
\newcommand{\smallO}[1]{\ensuremath{\mathop{}\mathopen{}o\mathopen{}\left(#1\right)}}
\begin{document}

\title{A new approach for solving  the linear  complementarity problem using smoothing functions}

\author{El hassene Osmani}\sameaddress{,2, *}\address{Laboratory of Fundamental and Numerical Mathematics, University Ferhat Abbas of Setif 1, Setif, Algeria.}
\author{ Mounir Haddou }\address{INSA Rennes, CNRS, IRMAR-UMR 6625, University of Rennes, Rennes, France.}
\author{Lina Abdallah}\address{Lebanese University, Tripoli, Lebanon.}
\subjclass{47H05, 90C33.}
\date{\emph{Keywords.} Linear complementarity problem, Newton's method, smoothing functions, $  \mathcal{P}$-matrix, interior-point methods, Soft-Max.}
\author{Naceurdine Bensalem}\sameaddress{1}

\begin{abstract} 
	
	Based on smoothing techniques, we propose two new methods to solve linear complementarity problems (LCP) called TLCP and Soft-Max. The idea of these  two new methods takes inspiration from interior-point methods in optimization. The technique that we propose avoids any parameter management while ensuring good theoretical convergence results. In our approach we do not need any complicated strategy to update the smoothing parameter $ r $ since we will consider it as a new variable. Our methods are validated by extensive numerical tests, in which we compare our methods to several other classical methods.
\end{abstract}
\maketitle

\renewcommand{\thefootnote}{}
\footnotetext{ $^*$Corresponding author: el-hassene.osmani@insa-rennes.fr }

\section{Introduction}

The linear complementarity problem consists in finding a vector in a finite-dimensional real vector space that satisfies a certain system of inequalities. Specifically, given a vector $ q  \in \R^{n}$ and a matrix $ M\in \R^{n\times n}$, the linear complementarity problem, abbreviated LCP, is to find a vector $ x \in \R^{n} $ such that 
\begin{equation} \label{1}
0 \leq x \perp (Mx+q) \geq 0. 
\end{equation}
This problem is known to have a unique solution for any $ q \in \R^{n} $ if and only if $ M $ is a P-matrix \cite{Cottle, Samelson}.
The linear complementarity problem has many important applications in engineering and equilibrium modeling \cite{Ferris, Pang}, and many numerical methods are developed to solve LCPs \cite{Billups, Chen}. Although the effectiveness of complementarity algorithms has improved substantially in recent years, the fact remains that increasingly more difficult problems are being proposed that are exceeding the capabilities of these algorithms. As a result, there is a real need to
propose new methods and algorithms to address complicated and difficult situations. Many algorithms have been proposed to solve problem LCP  \cite{Cottle, Murty}. They may be based on pivoting techniques \cite{CottleR, Lemke}, which often suffer from the combinatorial aspect of the problem, on interior point methods, which originate from an algorithm introduced by Karmarkar in linear optimization \cite{Karmarkar}, see also \cite{Kojima} for one of the first accounts on the use of interior-point methods to solve LCP. Some researchers try to solve LCPs by reformulating them as an unconstrained optimization \cite{Geiger}, and on nonsmooth Newton approaches \cite{Facchinei}, and rewrite the complementarity conditions as a system of smooth equations \cite{ill-posed}, such as the one considered here. See \cite{Cottle, Verlag} for other iterative methods. \\
In this work, we propose two new algorithms called TLCP and Soft-LCP for solving the LCP. The principle of these algorithms are as follows: first, we proposed two smoothing technique to regularize the complementary condition, we replace 
\begin{equation*}
0 \leq x \perp z \geq 0
\end{equation*}
by 
\begin{equation*}
\theta_{r}(x)+\theta_{r}(z)=1, \quad r\searrow 0,  
\end{equation*}
and 
\begin{equation*}
\forall \rho >0~~\quad~~  ~\quad~~~~~~~ x=\max (0, x-\rho z)\simeq r\log\left(1+e^{\dfrac{x-\rho z}{r}}\right), \quad r\searrow 0,
\end{equation*}
where $\theta_{r} $, $ \log $, and $ e^{.} $ operate componentwise on $ x $ and $ z $; then we give a strategy that decreases $ r $ during iterations and ensures the nonnegatives of variables. The main difference in our approach is that we do not need any complicated strategy to update the parameter $ r $ since we will consider it as a new variable. Finally, the two new algorithms are solved using the standard Newton method. To enforce a global convergence behavior, we also recommend using Armijo's line search.\\ 
This article is structured as follows. The first part of this paper is devoted to the presentation of the problem and gives some definitions and properties of the smoothing functions. In section 3, we give our approximate formulations and give the new formulation of the problem LCP. In section 4, we propose two new methods to solve the  LCP. In section 5,  we propose two generic algorithms to solve LCP and prove some convergence results. In section 6, we provide some numerical results where we present a comparison on some randomly generated problems and we study two concrete examples, the first one is a second-order ordinary differential equation and the second is an obstacle problem also, we tested our algorithms on several absolute value equations problems. Finally, we conclude our paper.
 \section{Preliminaries and Problem Setting }
In this section, we present some necessary definitions and lemmas. A matrix $ M\in \R^{n \times n} $ is said to be positive definite if $ \langle x, Mx  \rangle  >0$ for all nonzero $ x\in \R^{n}.$ $ M\in \R^{n \times n}  $  is called a $ \mathcal{P}$-matrix if all its minors are positive. As a consequence, if $ M $ is positive definite, then $ M $ is a  $ \mathcal{P}$-matrix. \\
Consider the linear complementarity problem LCP, which is to find a solution of the system $F(\mathbf{X})=0,$ with
\begin{equation}\label{2}
F(\mathbf{X})=\left[
\begin{array}{llllll} 
Mx+q-z\\
x.z 
\end{array}
\right],
\end{equation}
where $ \mathbf{X}=(x,~z)\in \R^{2n}_{+} $. Recall that the Hadamard product  $x.z  $ of two vectors $x  $ and $ z $ is the vector having its $ i $th component equal to $x_{i}z_{i}  $. \\To solve LCP, there are essentially three different classes of methods: equation-based methods (smoothing), merit functions and projection-type methods. Our goal in this paper is to present new and very simple smoothing and approximation schemes to solve LCP and to produce efficient numerical methods.
First, we state a result for the unique solution of an LCP, the following result was proved by Cottle, Pang and Stone \cite{Cottle}. Next, we  give the definition of $ \theta $-smoothing function and Soft-Max function
\begin{theorem} (Theorem 3.3.7, \cite{Cottle}). 
	A matrix $ M\in \R^{n \times n} $ is a  $ \mathcal{P}$-matrix if and only if the LCP \eqref{1} has a unique solution for every $ q \in \R^{n} $. 
\end{theorem}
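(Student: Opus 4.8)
The plan is to prove the two implications separately, relying on the classical sign-reversal characterization of $\mathcal{P}$-matrices: $M$ is a $\mathcal{P}$-matrix if and only if for every nonzero $x \in \R^n$ there is an index $i$ with $x_i (Mx)_i > 0$; equivalently, $x_i(Mx)_i \le 0$ for all $i$ forces $x = 0$. I would first record this characterization (itself a standard consequence of the definition via principal minors) and then treat uniqueness, existence, and the converse in turn.

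For uniqueness under the hypothesis that $M$ is a $\mathcal{P}$-matrix, I would suppose $x^1$ and $x^2$ both solve \eqref{1} for a fixed $q$, set $w^k = M x^k + q$ and $d = x^1 - x^2$, so that $Md = w^1 - w^2$. Expanding $d_i (Md)_i = (x^1_i - x^2_i)(w^1_i - w^2_i)$ and using the complementarity relations $x^k_i w^k_i = 0$ together with $x^k, w^k \ge 0$ gives $d_i(Md)_i = -x^1_i w^2_i - x^2_i w^1_i \le 0$ for every $i$. The sign-reversal characterization then yields $d = 0$, that is $x^1 = x^2$. This is the short, purely algebraic half of the argument.

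Existence is the crux and the main obstacle. I would argue topologically using the natural piecewise-linear residual map $H_q(x) = \min(x, Mx + q)$, whose zeros are exactly the solutions of \eqref{1}. On each cell determined by a choice of active and inactive indices, $H_q$ is affine with linear part a matrix assembled from rows of $I$ and of $M$; the $\mathcal{P}$-property guarantees that all these linear parts are nonsingular with determinants of one common sign, so $H_q$ has a well-defined, locally constant, nonzero topological degree. Combined with coercivity (again a consequence of the $\mathcal{P}$-property, ensuring $\|H_q(x)\| \to \infty$ as $\|x\| \to \infty$), this makes $H_q$ a proper local homeomorphism, hence a covering map of the simply connected target $\R^n$, and therefore a global homeomorphism; surjectivity then gives a solution for every $q$. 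Equivalently, one may phrase the same content in terms of the complementary cones generated by the columns of $I$ and $-M$, and show they cover $\R^n$. Making the degree and coercivity estimates rigorous, and verifying the uniform sign of the determinants, is the technical heart.

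For the converse I would argue by contraposition: if $M$ is not a $\mathcal{P}$-matrix, the characterization supplies a nonzero $\hat x$ with $\hat x_i (M\hat x)_i \le 0$ for all $i$. Writing $\hat w = M\hat x$ and splitting the index set according to the signs of $\hat x_i$ and $\hat w_i$, I would construct an explicit right-hand side $q$ for which \eqref{1} admits two distinct solutions (or, in the same vein, exhibit a $q$ for which two complementary cones overlap so that uniqueness fails), contradicting the hypothesis. The delicate point here is choosing $q$ so that the constructed pair genuinely satisfies nonnegativity and complementarity simultaneously; aligning the construction with the complementary-cone description keeps the bookkeeping manageable and completes the equivalence.
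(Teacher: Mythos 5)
The paper itself offers no proof of this statement: it is quoted as Theorem 3.3.7 of Cottle--Pang--Stone and used as a black box, so your proposal can only be judged on its own merits. Two of your three parts are correct. The uniqueness argument is exactly the classical one: with $d=x^1-x^2$ one gets $d_i(Md)_i=-x^1_iw^2_i-x^2_iw^1_i\le 0$ for all $i$, and the sign-reversal characterization of $\mathcal{P}$-matrices (Fiedler--Pt\'ak) forces $d=0$. The converse also goes through as you sketch it: given $\hat x\ne 0$ with $\hat x_i(M\hat x)_i\le 0$ for all $i$, set $x^1=\max(\hat x,0)$, $x^2=\max(-\hat x,0)$, and define $w^1,w^2\ge 0$ componentwise by $w^1_i=0,\ w^2_i=-(M\hat x)_i$ when $\hat x_i>0$, symmetrically when $\hat x_i<0$, and $w^1_i,w^2_i$ equal to the positive and negative parts of $(M\hat x)_i$ when $\hat x_i=0$; then $w^1-w^2=M\hat x$, so $q:=w^1-Mx^1=w^2-Mx^2$ is a single datum admitting the two distinct solutions $x^1\ne x^2$.

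The genuine gap is in the existence step. You pass from ``all pieces of $H_q$ have positive determinant'' (true: on the cell indexed by $\alpha$ the determinant is the principal minor $\det M_{\alpha\alpha}>0$) plus coercivity to ``$H_q$ is a proper local homeomorphism, hence a covering map.'' That inference is false: coherent orientation of a piecewise-linear map does not give local injectivity on cell boundaries. The piecewise-linear model of $z\mapsto z^2$ in the plane (four sectors, each mapped by an orientation-preserving linear map, the images winding twice around the origin) is proper, has all piece determinants positive, and is not locally injective at the origin; coherent orientation controls the degree, not local invertibility. Relatedly, you assert the degree is nonzero before having shown that generic targets have any preimage at all. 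Two standard repairs are available, both within reach of what you already wrote. (a) Degree route: properness makes $\deg(H_q,\R^n,y)$ independent of $y$; the image of the interior of any full-dimensional cell is open while the image of the $(n-1)$-skeleton has measure zero, so some regular value $y_0$ lies in the image, where the degree is the number of preimages, each counted $+1$, hence $\ge 1$; constancy of the degree then gives surjectivity. (b) More elementary: your uniqueness half already yields global injectivity of $H_q$, since $H_q(x)=H_q(x')=y$ means that $x-y$ and $x'-y$ both solve the LCP with datum $q+My-y$; injectivity plus Brouwer's invariance of domain makes $H_q$ an open map, properness makes its image closed, and connectedness of $\R^n$ gives surjectivity. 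Either repair closes the hole and turns your sketch into the classical proof.
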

\subsection{Definition of $ \theta $-smoothing function}
We introduce the function $ \theta $ with the following properties (these functions were used in \cite{M. Haddou and P. Maheux, HADDOU}). \\
Let $ \theta : \R \to ]-\infty,1[,$ be a non-decreasing continuous smooth concave function such that 
\begin{equation*}
\theta(t)<0  ~~\text{if}~~  t<0,~ \theta(0)=0 ~~\text{and}~~ \lim_{t \to +\infty} \theta(t)=1. 
\end{equation*}
One possible way to build such function is to consider non-increasing probability density functions $ f:\R_{+}\to \R_{+ } $ and then take the corresponding cumulative distribution function 
\begin{equation*}
\theta(t)=\int_{0}^t f(x)dx.
\end{equation*}
By definition of $f  $ we can verify that
\begin{equation*}
\lim_{t \to +\infty} \theta(t)= \int_{0}^{+\infty} f(x)dx=1,
\end{equation*}
and
\begin{equation*}
\theta(0)=\int_{0}^{0} f(x)dx=0. 
\end{equation*}
The non-decreasing hypothesis gives the concavity of $ \theta $. We then extend this functions for negative values in a smooth way.\\
Example of this family are $ \theta^{1}(t)=t/(t+1) $ if $ t\geq 0 $ and $ \theta^{1}(t)=t $ if $ t<0 $. \\
We introduce $ \theta_{r}(t):=\theta(\frac{t}{r}) $ for $ r>0.$ This definition is similar to the perspective functions in convex analysis. This functions satisfy 
\begin{center}
	$ \theta_{r}(0)=0 ~~  \forall r>0 $ and $ \lim_{r \searrow 0} \theta(t)= 1 ~~ \forall t >0.$
\end{center}
There are some examples of such functions 
\begin{equation*}
\theta_{r}^{1}(t)=\dfrac{t}{t+r}  ~\text{if} ~ t \geq 0  ~~ ~~\text{and}~~~~  \theta_{r}^{1}(t)=t ~\text{if}~  t<0,
\end{equation*}
\begin{equation*}
~~~~~~~~~~~~\theta_{r}^{2}(t)=1-e^{-t/r},~t\in \R.~~~~~~~~~~~~~~~~~~~~~~~~~~~~~~~~~~~~~~~~ 
\end{equation*}
The function $ \theta_{r}^{1}~$ will be extensively used in this paper and is illustrated in Figure. 1  for several values of $ r.$
\begin{figure}[H]\label{fig1}
	\begin{minipage}[H]{.46\linewidth}
		\begin{center}
			\includegraphics[width=9cm,height=7cm]{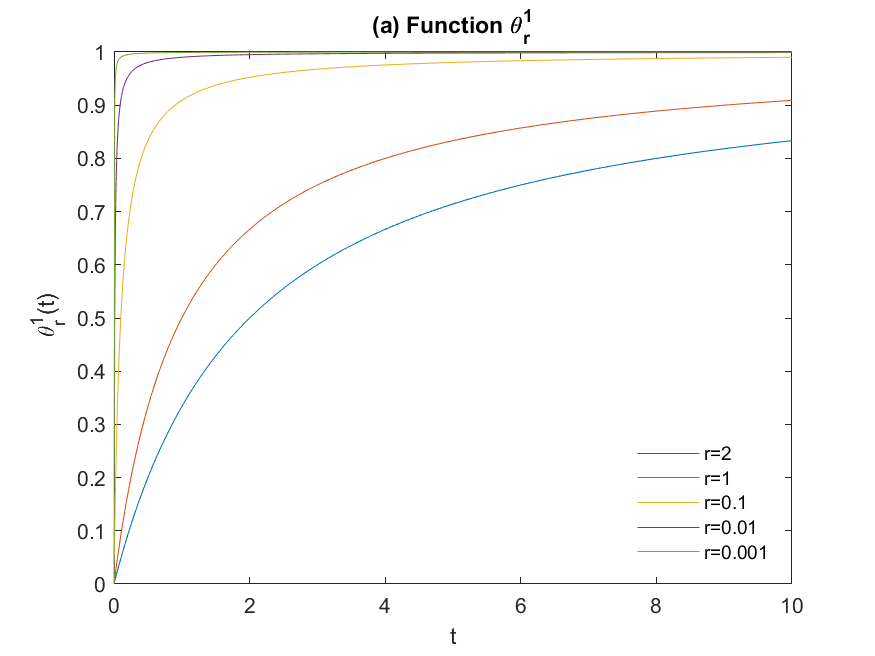}
		\end{center}
	\end{minipage} \hfill
	\begin{minipage}[H]{.46\linewidth}
		\begin{center}
			\includegraphics[width=9cm,height=7cm]{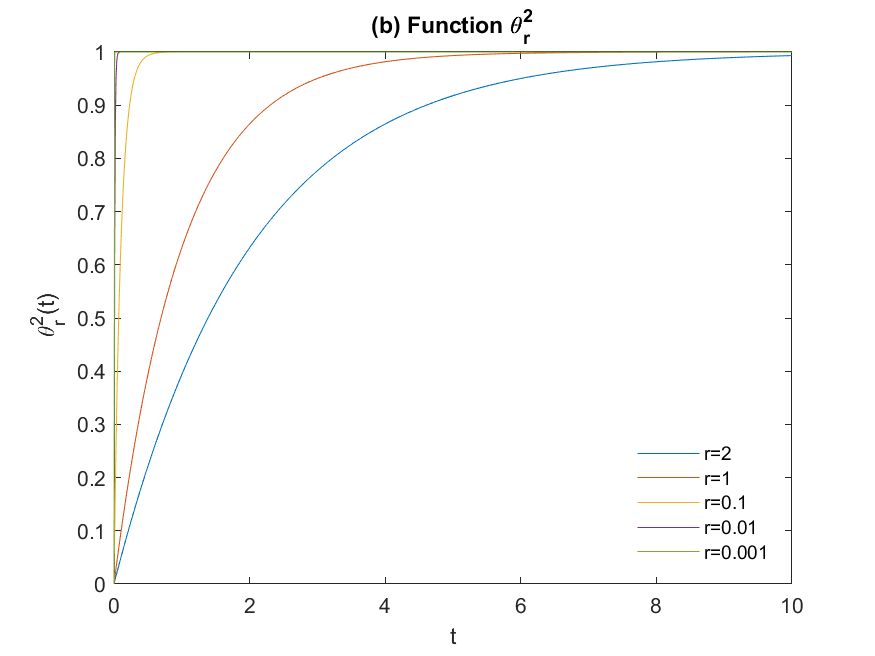}
		\end{center}
	\end{minipage}
	\caption{Function $ \theta_{r}$ for several values of $ r $.}
\end{figure}
It can be seen on the Figure. 1 that the function $ \theta_{r} $ behave as a step function when $ r $ becomes small.
\subsubsection{$ \theta $-smoothing of a complementarity condition}
A $\theta$-smoothing function paves the way for a smooth approximation of a complementarity condition. Let $ (x,z)\in \R^{2}$ be two scalars such that
\begin{equation} \label{2323}
0 \leq x \perp z \geq 0, 
\end{equation}
that is,
\begin{equation*}
x\geq 0,~~~~ z\geq 0, ~~~~ xz=0.  
\end{equation*}
In the $ (x,z)$-plane, the set of points obeying \eqref{2323} is the union of the two semi-axes  $ \{x \geq 0,~z=0\}~$ and \\$ \{x=0,~ z\geq 0\}.$ Visually, the nonsmoothness of \eqref{2323} is manifested by the "kink" at the corner $ (x,z)=(0,0).$ \\We consider two possible smooth approximations of \eqref{2323}, depending how it is rewritten in terms of $ \theta$-function.
\begin{lemma}
	\cite{HADDOU}	Given $ x, z \in \R_{+} $ and the parameter $ r>0,$ we have the equivalence  
	\begin{equation*}
	xz=0 \iff  \lim_{r \searrow 0} (\theta_{r}(x) + \theta_{r}(z)) \leq 1. 
	\end{equation*}
\end{lemma}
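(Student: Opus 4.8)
The plan is to reduce the equivalence to a computation of the pointwise limit $\lim_{r \searrow 0} \theta_{r}(t)$ on the nonnegative axis, followed by a short case analysis on the signs of $x$ and $z$. First I would record the limiting behavior of $\theta_{r}$ for $t \geq 0$. For $t = 0$ the defining property $\theta(0) = 0$ gives $\theta_{r}(0) = \theta(0) = 0$ for every $r > 0$, so the limit is $0$. For $t > 0$, writing $\theta_{r}(t) = \theta(t/r)$ and noting that $t/r \to +\infty$ as $r \searrow 0$, the hypothesis $\lim_{s \to +\infty} \theta(s) = 1$ yields $\lim_{r \searrow 0} \theta_{r}(t) = 1$. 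Since $\theta$ is non-decreasing, the map $r \mapsto \theta(t/r)$ is monotone in $r$ and bounded above by $1$, so each of these limits genuinely exists (not merely as a $\limsup$), and therefore the limit of the sum $\theta_{r}(x) + \theta_{r}(z)$ exists as well.

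Next I would combine these two facts for the pair $(x, z) \in \R_{+}^{2}$. Because $x, z \geq 0$, the product $xz = 0$ holds if and only if at least one of $x, z$ equals zero. I would then evaluate the limiting sum $L := \lim_{r \searrow 0}\bigl(\theta_{r}(x) + \theta_{r}(z)\bigr)$ in the four cases determined by whether each coordinate is zero or strictly positive: if both vanish, $L = 0$; if exactly one vanishes, $L = 0 + 1 = 1$; and if both are strictly positive, $L = 1 + 1 = 2$.

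To close the equivalence I would argue both implications directly from this table. For the forward direction, if $xz = 0$ then one of the first three cases occurs, so $L \in \{0, 1\}$ and in particular $L \leq 1$. For the converse I would argue by contraposition: if $xz \neq 0$ then, since $x, z \geq 0$, both factors must be strictly positive, which places us in the last case where $L = 2 > 1$, contradicting the hypothesis $L \leq 1$. Hence $L \leq 1$ if and only if $xz = 0$.

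I do not expect a genuine obstacle here, as the statement is essentially an immediate corollary of the limiting properties of $\theta_{r}$. The only points requiring a little care are justifying that the limit exists (via monotonicity of $\theta$) rather than working with a $\limsup$, and treating the boundary value $t = 0$ separately from $t > 0$, since $\theta_{r}(0) = 0$ does not tend to $1$. Making the case distinction explicit, together with the observation that on $\R_{+}$ the inequality $xz \neq 0$ forces both coordinates to be strictly positive, is what makes the two implications airtight.
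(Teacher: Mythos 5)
Your proof is correct: the pointwise limits $\theta_{r}(0)=0$ and $\lim_{r\searrow 0}\theta_{r}(t)=1$ for $t>0$ follow immediately from $\theta(0)=0$ and $\lim_{s\to+\infty}\theta(s)=1$, and your four-case table plus contraposition settles both implications. Note that the paper itself gives no proof of this lemma --- it is quoted from the reference [HADDOU] --- so there is no internal argument to compare against; your case analysis is the natural (and essentially unavoidable) one, and the only embellishment you add, invoking monotonicity of $r\mapsto\theta(t/r)$ to guarantee existence of the limit, is not even needed, since composition of limits already gives it. One stylistic remark: the statement's phrase ``the parameter $r>0$'' sits awkwardly with the limit $r\searrow 0$, but that imprecision is in the statement, not in your argument.
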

\begin{lemma}\label{120967}
	\cite{HADDOU}	$ \theta_{r}~$ is sub-additive for non-negative values, i.e. given $ x,~z \geq 0 $ it holds that 
	\begin{equation*}
	\theta_{r}(x) + \theta_{r}(z)~\geq~\theta_{r}(x+z) 
	\end{equation*}
	and we have the equivalence for $ r>0 $ 
	\begin{equation*}
	xz=0 \iff   \theta_{r}(x) + \theta_{r}(z)=\theta_{r}(x+z). 
	\end{equation*}
\end{lemma}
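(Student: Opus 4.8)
The plan is to derive both assertions from a single property of concave functions: \emph{diminishing increments}. For a concave $\theta_{r}$ and any fixed $h\ge 0$, the map $t\mapsto \theta_{r}(t+h)-\theta_{r}(t)$ is non-increasing in $t$ (and strictly decreasing when $\theta$ is strictly concave and $h>0$). This is immediate from the fact that chord slopes of a concave function decrease; since $\theta_{r}(t)=\theta(t/r)$ with $r>0$, concavity of $\theta$ transfers to $\theta_{r}$, and one has $\theta_{r}(0)=\theta(0)=0$ from the defining properties.

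For the subadditivity, I would apply the increment property with $h=x$ to the two base points $t=z\ge 0$ and $t=0$. This yields
\[
\theta_{r}(x+z)-\theta_{r}(z)\;\le\;\theta_{r}(x)-\theta_{r}(0)\;=\;\theta_{r}(x),
\]
which rearranges to $\theta_{r}(x+z)\le\theta_{r}(x)+\theta_{r}(z)$, the claimed inequality. Equivalently, one can write $x$ as the convex combination $\frac{x}{x+z}(x+z)+\frac{z}{x+z}\cdot 0$ and invoke Jensen's inequality, then add the analogous bound obtained by swapping the roles of $x$ and $z$ (the degenerate case $x=z=0$ being trivial since both sides vanish).

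For the equivalence, the forward direction is a direct substitution: if $xz=0$ then, say, $z=0$, and both $\theta_{r}(x)+\theta_{r}(z)$ and $\theta_{r}(x+z)$ collapse to $\theta_{r}(x)$ because $\theta_{r}(0)=0$. For the converse I would argue by contraposition: if $x>0$ and $z>0$, then strict concavity makes $t\mapsto\theta_{r}(t+x)-\theta_{r}(t)$ strictly decreasing, so comparing $t=z$ with $t=0$ gives $\theta_{r}(x+z)-\theta_{r}(z)<\theta_{r}(x)$, i.e.\ the subadditivity inequality is strict; hence equality forces $x=0$ or $z=0$, that is $xz=0$.

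The main point to watch is that the converse genuinely relies on \emph{strict} concavity: if $\theta$ were merely concave and, say, affine on a neighbourhood of $0$, then $\theta_{r}(x)+\theta_{r}(z)=\theta_{r}(x+z)$ could hold with $x,z>0$, breaking the equivalence. The two running examples $\theta_{r}^{1}$ and $\theta_{r}^{2}$ satisfy $\theta''<0$ on the relevant range, so the strict inequality used above is valid; I would therefore make the strict-concavity hypothesis explicit (it holds, for instance, whenever the generating density $f$ is strictly decreasing) before running the equality analysis.
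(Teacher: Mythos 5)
Your proof is correct, but there is nothing in the paper to compare it against: the paper states this lemma with a citation to \cite{HADDOU} and gives no proof of its own. Your argument is a sound, self-contained replacement. The diminishing-increments property (equivalently, the Jensen decomposition $x=\frac{x}{x+z}(x+z)+\frac{z}{x+z}\cdot 0$ together with $\theta_{r}(0)=0$) is the standard route to subadditivity of a concave function vanishing at the origin, and the forward direction of the equivalence is indeed just substitution. The most valuable part of your write-up is the caveat you flag at the end, which is a genuine gap in the lemma \emph{as stated in the paper}: under the stated hypotheses ($\theta$ concave, non-decreasing, smooth, $\theta(0)=0$, $\theta<0$ on the negatives, $\lim_{t\to+\infty}\theta(t)=1$), the implication $\theta_{r}(x)+\theta_{r}(z)=\theta_{r}(x+z)\Rightarrow xz=0$ can fail: a $\theta$ that is affine with positive slope on some interval $[0,a]$ and then bends over smoothly toward $1$ satisfies every stated condition, yet yields equality for all $x,z>0$ with $x+z\le ar$. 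So strict concavity on $\R_{+}$ (or an equivalent nondegeneracy condition, e.g.\ a strictly decreasing generating density $f$) must be assumed for the converse; it holds for the paper's two working examples $\theta^{1}$ and $\theta^{2}$, which is presumably why the restriction is left implicit there. With that hypothesis made explicit, your strict-increment argument for the converse is valid and your proof is complete.
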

In the case of the function $ \theta^{1}_{r} $ and by definition of this function we have 
\begin{equation*}
\theta^{1}_{r}(x)+\theta^{1}_{r}(z)=1 \iff xz=r^{2}.
\end{equation*}
So, when $r$ goes to $ 0,$ we simply get $ xz=0 $.\\
Now, we define the Soft-Max function that we will use in the next section to approximate the complementarity condition.
\subsection{Soft-Max Function}
Let $ f $ be a function defined as:
\begin{equation*}
f(x_{1},...,x_{n})=\max(x_{1},...,x_{n}),	
\end{equation*}
obviously, the max function is non-differentiable. We approximate the max function by a smooth function, noted Soft-Max function as introduced in \cite{Soft} by: 
\begin{equation*}
g_{r}(x_{1},...,x_{n})=r\log\left(~\sum_{i=1}^{n}e^{~x_{i}/r}\right). 
\end{equation*}
Indeed:
\begin{equation*}
g_{r}(x)=r\log\left(~\sum_{i=1}^{n}e^{~x_{i}/r}\right) -r\log n,
\end{equation*}
then $\forall r>0 $ and  $\forall x \in \R^{n},$
\begin{equation*}
\begin{split}
g_{r}(x)  \leq r\log\left(~n\max_{i}e^{~x_{i}/r}\right)-r\log n =& \max_{i}x_{i},
\end{split}	
\end{equation*}
\begin{equation*}
\begin{split}
\max_{i}x_{i}\leq r\log\left(~\sum_{i=1}^{n}e^{~x_{i}/r}\right)=&g_{r}(x)+r\log n.
\end{split}	
\end{equation*}
Then 
\begin{equation*}
\Vert g_{r}(x)-\max_{i} x_{i}    \Vert \leq r\log n.
\end{equation*}
Thus $ g_{r}$ is a uniformly smoothing approximation function of $ f.$ Notice that the accuracy of the Soft-Max approximation depends on scale $ r $. 
\section{An approximate formulation}
In this section, we present two formulations for LCP \eqref{1} by using two approximations, the first with the $ \theta$-function and the second with the Soft-Max function.
\subsection{ Approximation of LCP using $ \theta $-function}
We reformulate the problem LCP using $ \theta_{r} $ function, we regularize each complementarity constraint by considering 
\begin{equation*}
x_{i}z_{i}=0, ~~~ \text{by} ~~~ \theta_{r}(x_{i})+\theta_{r}(z_{i})=1,~~~~~~~ \forall~  i=1,...n, 
\end{equation*}
in fact $ x_{i}z_{i}=0 $ should be approximated by 
\begin{equation*}
\theta_{r}(x_{i})+\theta_{r}(z_{i}) \leq 1, ~~~~ \text{(both can be zeros)}
\end{equation*}
but we use an implicit assumption  of strict complementarity. Using this approximation we obtain the following formulation:
\begin{equation}
(\tilde{P}_{\theta})~~~~~~\left\{
\begin{array}{llllll} 
Mx+q=z,  \\
x \geq 0,~~~ z \geq 0,~~~ r\searrow 0 \\
\theta_{r}(x)+\theta_{r}(z)-\mathbf{1}=0.
\end{array}
\right.
\end{equation}
Here, it is understood that $ \theta_{r}$ operates componentwise on $ x$ and $z,$ while $ \mathbf{1}\in \R^{n}~ $ is the vector whose entries are all equal to $ 1.~ $
We consider the family $ \{\tilde{F}_{\theta}(.,r),~r>0\}$,  where 
\begin{equation}\label{10}
\tilde{F}_{\theta}(\mathbf{X},r)=\left[
\begin{array}{llllll} 
Mx+q-z\\
r(\theta_{r}(x)+\theta_{r}(z)-\mathbf{1}) 
\end{array}
\right],
\end{equation}
is a regularized function of $ F $ defined in \eqref{2}. It is highly recommended that the smoothed complementarity equations in \eqref{10} be premultiplied by $ r $, so as to control the magnitude of their partial derivatives. Indeed, for all $ t \geq 0, $
\begin{equation*}
\theta_{r}^{'}(t)=\frac{1}{r}\theta^{'}\left(\frac{t}{r}\right), 
\end{equation*} 
can be seen to blow up when $ r\downarrow 0,~ $while $ r\theta^{'}_{r}(t) $ tends to the finite limit $ \theta^{'}(0). $ 
\subsection{Approximation of LCP using Soft-Max}
It is obvious that the vectors $ x $ and $ z $ satisfy complementarity condition if and only if
\begin{equation*}
\begin{split}
\forall \rho >0~~\quad~~& ~\quad~~~~~~~ x=\max (0, x-\rho z).
\end{split}	
\end{equation*}
Using the Soft-Max function  defined below, we obtain an approximate formulation for LCP 
\begin{equation}
(P_{s}^{r})~~~~~~\left\{
\begin{array}{llllll} 
Mx+q=z, ~~~ r\searrow 0, ~~~\rho >0 \\
x=\max(0,x-\rho z)\simeq r\log\left(1+e^{\dfrac{x-\rho z}{r}}\right),
\end{array}
\right.
\end{equation}
by the same way as for \eqref{10}, $ \log $ and $ e^{.} $ operate componentwise on $ x $ and $ z $. We consider the family $ \{\tilde{F}_{s}(.,r),~r>0\},$ where 
\begin{equation}\label{11}
\tilde{F}_{s}(\mathbf{X},r)=\left[
\begin{array}{llllll} 
Mx+q-z\\
x-r\log\left(1+e^{\dfrac{x-\rho z}{r}}\right) 
\end{array}
\right],
\end{equation}	
is a regularized function of $ F $ defined in \eqref{2}.
\section{Solving LCP via New Algorithm}
In this section, we present the idea of our algorithms for optimization problems to solve the LCP, but here we don't have any objective function to minimize. Our methods take inspiration from Interior Point Methods. \\
We recall that the interior-point methods have replaced the original nonsmooth problem LCP by a sequence of regularized problems
\begin{equation} \label{12345543211234}
F_{r}(\mathbf{X})=0,
\end{equation}
where 
\begin{equation}
\mathbf{X}=\left [
\begin{array}{llllll} 
x \\
z
\end{array}\right ] \in \R^{2n}_{+}, \quad F_{r}(\mathbf{X})=\left [
\begin{array}{llllll} 
Mx+q-z \\
x.z-r\mathbf{1}
\end{array}\right ],
\end{equation}
where $ r \geq 0 $ is the smoothing parameter, $\mathbf{1}  \in \R^{n}$ is the vector whose components are all equal to 1.
The Jacobian matrix of $ F_{r} $ with respect to $ \mathbf{X}$, does not depend on $ r $ and can be denoted by   
\begin{equation} \label{0854356}
\nabla_{\mathbf{X}}F_{r}(\mathbf{X})=\left(\begin{array}{cc}
M&-I\\
Z& X
\end{array}\right),
\end{equation}
where $ Z=\text{diag}(z) $ and  $ X=\text{diag}(x)$, i.e. the diagonal matrix of $ z $ (resp. $ x $).
\subsection{When the parameter becomes a variable}
In the system \eqref{12345543211234}, the status of the parameter $ r $ is very distinct from that of the variable $ \mathbf{X}$. While $ \mathbf{X}$ is computed "automatically" by a Newton iteration, $ r$ has to be updated "manually" in an ad-hoc manner.  \\
Our goal is to find a strategy that decreases $ r $ during iterations and ensures the nonnegative of variables. However, we must adjust the strategy when the model or its parameters are changed. 
To avoid this trouble, we consider $ r $ as an unknown of the system instead of a parameter. We feel that it would be judicious to incorporate the parameter $ r$ into the variables.
Let us therefore consider the enlarged vector of unknowns 
\begin{equation}
\mathbb{X}= \left [\begin{array}{llllll} 
\mathbf{X} \\
r
\end{array}\right] \in \R^{2n} \times \R_{+},
\end{equation}
and then consider a system  of $ 2n+1 $ equations
\begin{equation}
\mathbb{F}_{\theta}(\mathbb{X})=0, ~~(\text{resp}.~~ \mathbb{F}_{s}(\mathbb{X})=0),
\end{equation}
to be on $\mathbb{X}$. To this end, let us remind ourselves that our ultimate goal is to solve $ \tilde{F}_{\theta}(\mathbf{X},0)=0$ (resp. $ \tilde{F}_{s}(\mathbf{X},0)=0$), together with the inequalities $ x\geq 0,~z\geq 0. $ \\
To ensure the nonnegative of variables, we need a new equation. So we consider the function $ f(u)=\frac{1}{2}\text{min}^{2}(u, 0).$ It is easy to see that $ f(u)=0 $ when $ u \geq 0 $. Then we add the  following equation in $ \tilde{F}_{\theta}(\mathbf{X},r)$ (resp. $ \tilde{F}_{s}(\mathbf{X},r)$)
\begin{equation} \label{12345}
\frac{1}{2} \Vert x^{-}\Vert^{2}+ \frac{1}{2} \Vert z^{-}\Vert^{2}+r^{2}=0,
\end{equation}
where
\begin{equation*}
\Vert x^{-}\Vert^{2}=\sum_{i=1}^{n} \text{min}^{2}(x_{i},0), \quad \Vert z^{-}\Vert^{2}=\sum_{i=1}^{n} \text{min}^{2}(z_{i},0).
\end{equation*} 
This equation implies that $r=0$ and all variables are nonnegative. Hence, we can define the new system using $ \theta$-function (we restrict our choice of $ \theta$-function to  $ \theta_{r}^{1}(x)=\frac{x}{x+r} $.) and Soft-Max function by 
\begin{equation} \label{889}
\mathbb{F}_{\theta}(\mathbb{X})=\left[\begin{array}{llllll} 
Mx+q-z \\
r(\theta_{r}^{1}(x)+\theta_{r}^{1}(z)-\mathbf{1})\\
\frac{1}{2} \Vert x^{-}\Vert^{2}+\frac{1}{2} \Vert z^{-}\Vert^{2}+r^{2}
\end{array}
\right],
\end{equation}
and 
\begin{equation} \label{88}
\mathbb{F}_{s}(\mathbb{X})=\left [
\begin{array}{llllll} 
Mx+q-z \\
x-r\log\left(1+e^{\dfrac{x-\rho z}{r}}\right) \\
\frac{1}{2} \Vert x^{-}\Vert^{2}+\frac{1}{2} \Vert z^{-}\Vert^{2}+r^{2}
\end{array}
\right].
\end{equation}
The two Jacobian matrices of $\mathbb{F}_{\theta} $ and $ \mathbb{F}_{s} $ are:  
\begin{equation}
\nabla_{\mathbb{X}}\mathbb{F}_{\theta}(\mathbb{X})=
\begin{pmatrix}
M_{n\times n} & -I_{n \times n} & 0_{n \times 1}  \\
\text{diag}\left(\dfrac{r^{2}}{(x+r)^{2}}\right)& \text{diag}\left(\dfrac{r^{2}}{(z+r)^{2}}\right)& W \mathbf{e} \\
(x^{-})^{\text{T}}&(z^{-})^{\text{T}}& 2r  \\
\end{pmatrix},
\end{equation}
and 
\begin{equation}
\nabla_{\mathbb{X}}\mathbb{F}_{s}(\mathbb{X})=\begin{pmatrix}
M_{n\times n} & -I_{n \times n} & 0_{n \times 1}  \\
\text{diag}\left(\dfrac{1}{1+e^{\dfrac{x-\rho z}{r}}}\right)&\text{diag}\left(\dfrac{\rho e^{\dfrac{x-\rho z}{r}}}{1+e^{\dfrac{x-\rho z}{r}}}\right) &V\mathbf{e}   \\
(x^{-})^{\text{T}} & (z^{-})^{\text{T}} & 2r 	
\end{pmatrix},
\end{equation}
where $  x^{-}  $ is the vector of components $ x_{i}^{-}=\min(x_{i},0) $ and similarly for $ z^{-},$
\begin{equation*}
V =\text{diag}\left(-\log(1+e^{\dfrac{x-\rho z}{r}})+\dfrac{\dfrac{x-\rho z}{r}e^{\dfrac{x-\rho z}{r}}}{1+e^{\dfrac{x-\rho z}{r}}}    \right),
\end{equation*}
and 
\begin{equation*}
W=\text{diag} \left(\dfrac{x^{2}}{(x+r)^{2}}+\dfrac{z^{2}}{(z+r)^{2}}-1\right),
\end{equation*}
and $ \mathbf{e}  $ is a n-dimensional vector whose entries are equal to $ 1 $. If $ \mathbb{F}_{\theta}(\mathbb{X})=0$ (resp. $\mathbb{F}_{s}(\mathbb{X})=0$) we obtain $ r=0. $ Hence in this case, $ \nabla_{\mathbb{X}}\mathbb{F}_{\theta}(\mathbb{X})$ becomes singular (resp. $ \nabla_{\mathbb{X}}\mathbb{F}_{s}(\mathbb{X}) $ becomes singular) since $ \text{det} \nabla_{\mathbb{X}}\mathbb{F}_{\theta}(\mathbb{X})=0 $ (resp. $ \text{det}\nabla_{\mathbb{X}}\mathbb{F}_{s}(\mathbb{X})=0$). To solve this issue, we add a small enough positive parameter $ \varepsilon $ to equation \eqref{12345}. We get 
\begin{equation} \label{123456}
\frac{1}{2} \Vert x^{-}\Vert^{2}+ \frac{1}{2} \Vert z^{-}\Vert^{2}+r^{2}+\varepsilon r=0.
\end{equation} 
Hence, we define the following systems
\begin{equation} \label{889}
\mathbb{F}_{\theta}(\mathbb{X})=\left[\begin{array}{llllll} 
Mx+q-z \\
r(\theta_{r}^{1}(x)+\theta_{r}^{1}(z)-\mathbf{1})\\
\frac{1}{2} \Vert x^{-}\Vert^{2}+ \frac{1}{2} \Vert z^{-}\Vert^{2}+r^{2}+\varepsilon r
\end{array}\right],
\end{equation}
and 
\begin{equation} \label{88}
\mathbb{F}_{s}(\mathbb{X})=\left [
\begin{array}{llllll} 
Mx+q-z \\
x-r\log\left(1+e^{\dfrac{x-\rho z}{r}}\right) \\
\frac{1}{2} \Vert x^{-}\Vert^{2}+ \frac{1}{2} \Vert z^{-}\Vert^{2}+r^{2}+\varepsilon r
\end{array}
\right ].
\end{equation}
\section{Convergence}
In this section, we propose two generic algorithms to solve LCP and prove some convergence results. First, we present two Lemmas that will used to prove our main results.
\begin{lemma} \label{22786633}
	We consider the following system 
	\begin{equation} 
	\begin{array}{llllll}
	Z.X=0 \\
	Z \geq 0,~~ X\geq 0 ,
	\end{array}
	\end{equation}		
	where $ Z=\text{diag}(z) $ and  $ X=\text{diag}(x)$. Assume that $ Z,~X$ are strictly complementary (i.e. $ \exists~ \alpha >0 $ such that  $ Z+X>\alpha$). Then $ J $ is singular if and only if $ J_{l} $ is singular, where 
	\begin{equation*}
	J=\left(\begin{array}{cc}
	M&-I\\
	Z& X
	\end{array}\right) \text{and} ~~~~ 
	J_{l}=\left(\begin{array}{cc}
	M&-I\\
	\phi(Z)& \phi(X)
	\end{array}\right),
	\end{equation*}
	such that 
	\begin{equation*}
	\phi(t)=\left\{\begin{array}{ccc}
	1 &\text{if}& t \neq 0\\
	0 &\text{if}& t=0,\\
	\end{array} \right.
	\end{equation*}
	here $ \phi $ operates componentwise on $ t $, and it verifies the following system
	\begin{equation*} 
	\begin{array}{llllll}
	\phi(Z).\phi(X)=0 \\
	\phi(Z) \geq 0,~~ \phi(X)\geq 0.
	\end{array}
	\end{equation*}
\end{lemma}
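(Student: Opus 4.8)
The plan is to show that $J$ is obtained from $J_l$ by left-multiplication by an invertible (block-diagonal) matrix, so that their determinants differ only by a nonzero factor. First I would exploit the hypotheses to pin down the structure of $x$ and $z$. Since $Z$ and $X$ are diagonal, the equation $Z.X=0$ reads $z_i x_i = 0$ for every $i$, while the strict-complementarity bound $z_i + x_i > \alpha > 0$ forbids both from vanishing simultaneously. Hence the index set $\{1,\dots,n\}$ splits into two disjoint parts
\[
A = \{\, i : x_i > 0,\ z_i = 0 \,\}, \qquad B = \{\, i : z_i > 0,\ x_i = 0 \,\},
\]
and on $A$ we have $\phi(x_i)=1,\ \phi(z_i)=0$, whereas on $B$ we have $\phi(z_i)=1,\ \phi(x_i)=0$. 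In particular $\phi(X)$ and $\phi(Z)$ are complementary $0/1$ diagonal matrices, from which the auxiliary system $\phi(Z).\phi(X)=0$, $\phi(Z)\geq 0$, $\phi(X)\geq 0$ follows immediately.

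Next I would introduce the diagonal matrix $D = \text{diag}(d_1,\dots,d_n)$ defined by $d_i = x_i$ for $i \in A$ and $d_i = z_i$ for $i \in B$. By construction every $d_i > 0$, so $D$ is invertible. The key observation is that the bottom block $[\,Z \mid X\,]$ of $J$ is exactly $D$ times the bottom block $[\,\phi(Z) \mid \phi(X)\,]$ of $J_l$: on a row $i \in A$ the only nonzero entry of $[\,Z \mid X\,]$ is $x_i = d_i$ in the $X$-part, which is $d_i$ times the entry $1$ of $[\,\phi(Z)\mid\phi(X)\,]$; on a row $i \in B$ the only nonzero entry is $z_i = d_i$ in the $Z$-part, again $d_i$ times the corresponding $1$. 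Equivalently, $D\phi(Z)=Z$ and $D\phi(X)=X$, so that
\[
J = \begin{pmatrix} M & -I \\ Z & X \end{pmatrix} = \begin{pmatrix} I & 0 \\ 0 & D \end{pmatrix} \begin{pmatrix} M & -I \\ \phi(Z) & \phi(X) \end{pmatrix} = \begin{pmatrix} I & 0 \\ 0 & D \end{pmatrix} J_l.
\]

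Finally, since $\det \begin{pmatrix} I & 0 \\ 0 & D \end{pmatrix} = \det D = \prod_i d_i \neq 0$, multiplicativity of the determinant gives $\det J = (\det D)\,\det J_l$ with $\det D \neq 0$. Therefore $\det J = 0$ if and only if $\det J_l = 0$, i.e. $J$ is singular precisely when $J_l$ is, which is the claim. I expect the only delicate point to be the first step: one must use both $Z.X = 0$ and the strict-complementarity bound to guarantee that the partition into $A$ and $B$ is genuine (no index is left with $x_i = z_i = 0$) and, crucially, that each $d_i$ is strictly positive so that $D$ is invertible. The remaining manipulations are routine.
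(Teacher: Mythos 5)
Your proof is correct, and it takes a cleaner route than the paper's own argument. Both proofs rest on the same structural fact---strict complementarity together with $Z.X=0$ splits the indices into the two support sets you call $A$ and $B$, so that each bottom row of $J$ is a strictly positive multiple $d_i$ of the corresponding bottom row of $J_l$---but the implementations differ. The paper permutes rows and columns so that the nonzero diagonal blocks $Z_1>0$ and $X_2>0$ are grouped together, then expands the determinants of the permuted matrices $J_\sigma$ and $(J_l)_\sigma$, asserting that both equal a nonzero prefactor times the determinant of one and the same matrix $C$. Your factorization
\[
J=\begin{pmatrix} I & 0\\ 0 & D\end{pmatrix} J_l,\qquad D=\mathrm{diag}(d_1,\dots,d_n),\quad d_i>0,
\]
reaches the same conclusion by multiplicativity of the determinant alone, with no permutation, no cofactor expansion, and no auxiliary matrix $C$ whose sameness across the two expansions is asserted rather than exhibited. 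What your approach buys is rigor and brevity: in the paper the prefactors are written as $\pm\prod_{i\in\I}x_i\prod_{i\in\I}z_i$, which taken literally vanish (for every index one of $x_i$, $z_i$ is zero) and must be reinterpreted as products over the respective supports, whereas in your argument the only delicate point---that every $d_i$ is strictly positive, hence $D$ is invertible---is exactly the point you isolate and verify from the hypotheses.
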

\begin{proof}
	By the strict complementarity hypothesis, we range the rows and the columns of  $ J $ and  $ J_{l}$ as follows   
	\begin{displaymath}
	J_{\sigma}=\left(\begin{array}{cc}
	M_{\sigma}&-I_{\sigma}\\
	\left(\begin{array}{cc}
	Z_{1}&0\\
	0& 0
	\end{array}\right)& \left(\begin{array}{cc}
	0&0\\
	0& X_{2}
	\end{array}\right)
	\end{array}\right),
	\end{displaymath}
	where $ X_{2} > 0$ and $ Z_{1} >0 $ and 
	\begin{displaymath}
	(J_{l})_{\sigma}=\left(\begin{array}{cc}
	M_{\sigma}&-I_{\sigma}\\
	\left(\begin{array}{ccc}
	\begin{array}{ccc}
	1&        & \\
	& \ddots &  \\
	&        & 1 \\
	\end{array}&        & 0\\
	&  &  \\
	0 &        & 0 \\
	\end{array}\right)& \left(\begin{array}{ccc}
	
	0&        & 0\\
	& \ddots &  \\
	0&        & \begin{array}{ccc}
	
	1&        & 0\\
	& \ddots &  \\
	0&        & 1 \\
	\end{array} \\
	\end{array}\right)
	\end{array}\right).
	\end{displaymath}
	The determinant of the two matrices  $ J_{\sigma} $ and $ (J_{l})_{\sigma} $ are equal to 
	\begin{equation*}
	\text{det}(J_{\sigma})= \left\vert \begin{array}{cc}
	M_{\sigma}&-I_{\sigma}\\
	\left(\begin{array}{cc}
	Z_{1}&0\\
	0& 0
	\end{array}\right)& \left(\begin{array}{cc}
	0&0\\
	0& X_{2}
	\end{array}\right)
	\end{array}\right \vert =\pm \prod_{i \in \I}^{} x_{i} \prod_{i \in \I}^{} z_{i}~~ \text{det}(C),
	\end{equation*}
	\begin{equation*}
	\text{det}((J_{l})_{\sigma})=\left \vert\begin{array}{cc}
	M_{\sigma}&-I_{\sigma}\\
	\left(\begin{array}{ccc}
	\begin{array}{ccc}
	1&        & \\
	& \ddots &  \\
	&        & 1 \\
	\end{array}&        & 0\\
	&  &  \\
	0 &        & 0 \\
	\end{array}\right)& \left(\begin{array}{ccc}
	0&        & 0\\
	& \ddots &  \\
	0&        & \begin{array}{ccc}
	1&        & 0\\
	& \ddots &  \\
	0&        & 1 \\
	\end{array} \\
	\end{array}\right)
	\end{array}\right\vert=\pm \prod_{i \in \I}^{} \phi(x_{i}) \prod_{i \in \I}^{} \phi(z_{i})~~ \text{det}(C),
	\end{equation*}
	where $ C $ is a certain matrix and $ \I= \{1,...,n\}$. Since
	\begin{equation*}
	\pm \prod_{i \in I}^{} x_{i} \prod_{i \in \I}^{} z_{i} ~~~\quad\text{and} ~~~ \quad\prod_{i \in \I}^{} \phi(x_{i}) \prod_{i \in \I}^{} \phi(z_{i}),
	\end{equation*}
	are nonzeros, then we can conclude that $ J $ and $ J_{l} $ are invertibles and singulars at the same time.
\end{proof}
\begin{lemma} \label{19624}
	Suppose that   $ M $ has all its principal minors are nonzeros. Then, $ J $ is invertible, where 
	\begin{equation*}
	J=\left(\begin{array}{cc}
	M&-I\\
	Z& X
	\end{array}\right).
	\end{equation*}
\end{lemma}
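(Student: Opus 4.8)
The plan is to reduce, via Lemma \ref{22786633}, the invertibility of $J$ to that of the simpler ``binary'' matrix
\[
J_{l}=\left(\begin{array}{cc} M & -I \\ \phi(Z) & \phi(X)\end{array}\right),
\]
and then to show that the kernel of $J_{l}$ is controlled by a single principal submatrix of $M$, which is invertible by hypothesis. Throughout I work under the hypotheses of Lemma \ref{22786633}, namely that $X$ and $Z$ are complementary and strictly complementary ($Z\cdot X=0$ and $Z+X>0$). Some such structural assumption on $X,Z$ must be inherited from the preceding lemma, for otherwise the statement fails: with $X=Z=I$ one can exhibit an $M$ all of whose principal minors are nonzero while $J$ is singular. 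Strict complementarity together with $Z\cdot X=0$ means that for each index exactly one of $x_{i},z_{i}$ is nonzero, so I may partition $\{1,\dots,n\}=\alpha\cup\beta$ with $\alpha=\{i:z_{i}\neq 0\}$ and $\beta=\{i:x_{i}\neq 0\}$. Then $\phi(Z)$ and $\phi(X)$ are the diagonal $0$--$1$ projectors onto the coordinate sets $\alpha$ and $\beta$, with disjoint supports summing to the identity.

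First I would invoke Lemma \ref{22786633}: since $J$ is singular if and only if $J_{l}$ is singular, it suffices to prove that $J_{l}$ is nonsingular, and I find a kernel computation cleaner here than a determinant expansion. Suppose $J_{l}(u,w)^{\text{T}}=0$. The top block gives $w=Mu$, and the bottom block gives $\phi(Z)u+\phi(X)w=0$. Because $\phi(Z)$ and $\phi(X)$ are diagonal with the disjoint supports $\alpha$ and $\beta$, this last equation decouples componentwise into $u_{i}=0$ for $i\in\alpha$ and $w_{i}=0$ for $i\in\beta$. Hence $u$ is supported on $\beta$, while $w=Mu$ vanishes on $\beta$.

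Next I would restrict the relation $w=Mu$ to the rows indexed by $\beta$. For $i\in\beta$ this yields $0=w_{i}=\sum_{j}M_{ij}u_{j}=\sum_{j\in\beta}M_{ij}u_{j}$, the last equality because $u$ vanishes off $\beta$. In matrix form this reads $M_{\beta\beta}\,u_{\beta}=0$, where $M_{\beta\beta}$ is the principal submatrix of $M$ on the index set $\beta$. By hypothesis every principal minor of $M$ is nonzero, so $\det M_{\beta\beta}\neq 0$ and $M_{\beta\beta}$ is invertible; therefore $u_{\beta}=0$, whence $u=0$ and then $w=Mu=0$. Thus $\ker J_{l}=\{0\}$, so $J_{l}$ is invertible, and by Lemma \ref{22786633} so is $J$.

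I expect the only genuinely delicate point to be the bookkeeping of the index partition: one must correctly observe that the bottom equation forces $u$ onto $\beta$ and $w$ off $\beta$, so that the surviving linear system involves exactly the principal block $M_{\beta\beta}$ and none of the off-diagonal blocks of $M$. Everything else is routine linear algebra. The same computation can in fact be run verbatim on $J$ itself, using $z_{i}u_{i}+x_{i}w_{i}=0$ and complementarity in place of the $0$--$1$ projectors; this both explains why the result holds and confirms that it is the complementarity structure, rather than merely the nonvanishing of the principal minors, that makes the statement true.
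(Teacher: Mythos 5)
Your proof is correct, and it reaches the conclusion by a genuinely different final step than the paper's. Both arguments lean on Lemma \ref{22786633} to pass from $J$ to the binary matrix $J_{l}$, and both ultimately isolate the same decisive object: the principal submatrix of $M$ indexed by $\beta=\{i: x_{i}\neq 0\}$ (the paper's $M_{22}$ after its permutation $\sigma$, your $M_{\beta\beta}$). The difference is in how invertibility of $J_{l}$ is extracted. The paper performs a block determinant expansion, permuting rows and columns and eliminating the $-I$ and identity blocks to obtain $\det\left((J_{l})_{\sigma}\right)=\pm\det(M_{22})$, which is nonzero by hypothesis; you instead run a kernel computation, showing that any $(u,w)$ with $J_{l}(u,w)^{\text{T}}=0$ has $u$ supported on $\beta$ and $Mu$ vanishing on $\beta$, so $M_{\beta\beta}u_{\beta}=0$ forces $(u,w)=(0,0)$. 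Your route buys two things. First, it avoids the determinant bookkeeping (the unspecified matrix $C$ carried over from Lemma \ref{22786633}, the $\pm$ signs), which is the least rigorous part of the paper's presentation. Second, your closing observation that the same computation runs verbatim on $J$ itself, using $z_{i}u_{i}+x_{i}w_{i}=0$ together with complementarity, shows that Lemma \ref{22786633} can be bypassed entirely; the paper's determinant approach does not make this apparent. You are also right to flag that the lemma as stated omits the hypotheses actually needed on $X$ and $Z$: without complementarity and strict complementarity inherited from Lemma \ref{22786633} the statement is false (your $X=Z=I$ example with $M=(-1)$ in dimension one gives $\det J=0$ while every principal minor of $M$ is nonzero), and the paper's proof uses those hypotheses implicitly the moment it invokes Lemma \ref{22786633}.
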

\begin{proof}
	Suppose that $ M $ is decomposed as  
	\begin{displaymath}
	M=\left(\begin{array}{cc}
	M_{11}&M_{12}\\
	M_{21}& M_{22}
	\end{array}\right).
	\end{displaymath}
	By Lemma \eqref{22786633}, the determinant of $ (J_{l})_{\sigma} $ is equal to 
	\begin{equation*}
	\begin{split}
	\vert (J_{l})_{\sigma} \vert =\left\vert\begin{array}{cccccccccc}
	M_{11}&      &    && M_{12}    &  -1    &     &         &   &0    \\
	&      &    &       &    &        &     &  \ddots&    &   \\
	M_{21}&      &    & &M_{22}    &    0   &    &        & \ddots  &  \\
	&      &    &      &    &        &     &      &  & -1\\
	1     &&    &       & 0   &    0   &    &        &   &0  \\
	&   \ddots   &   &       &    &        &     &       &   &     \\
	&      &    1&       &    &       &     &       &   &     \\
	&      &    &       &    &        &    & 1 &   &   \\
	&      &    &      &   &      &    &       & \ddots  &      \\
	0&      &    &      &0    &    0   &    &       &   &1      \\
	\end{array}\right\vert &=\pm \left\vert\begin{array}{ccc}
	\begin{array}{ccc}
	M_{11}&~ &  \\
	& ~&\\
	& ~& \\
	\end{array}&\begin{array}{ccc}
	M_{12}\\
	\\
	\\
	\end{array}&\begin{array}{ccc}
	-1&        & \\
	& \ddots &  \\
	&        &- 1 \\
	\end{array}\\
	M_{21}~~~~~~~~~& M_{22}&0 \\
	\begin{array}{ccc}
	1&        & \\
	& \ddots &  \\
	&        & 1 \\
	\end{array}&  \begin{array}{ccc}
	\\
	\\
	0\\
	\end{array}   &   \begin{array}{ccc}
	\\
	\\
	0\\
	\end{array}    
	\end{array}\right\vert \\
	&=\pm\left\vert\begin{array}{cc}
	\begin{array}{ccc}
	M_{12} \\
	\\
	\\
	\end{array}   &\begin{array}{ccc}
	-1&        & \\
	& \ddots &  \\
	&        &- 1 \\
	\end{array}\\
	M_{22}&0 \\    
	\end{array}\right\vert=\pm \vert M_{22} \vert.
	\end{split}
	\end{equation*}
	In view of Lemma \eqref{22786633}, we can conclude that $ J $ is invertible. 
\end{proof}

Below is a result about the Jacobian matrix of $ \mathbb{F}_{s}(\mathbb{X}),$ and which will be useful for later purposes.
\begin{theorem} \label{765699}
	Suppose that $\mathbf{X}^{*}=(x^{*},z^{*})$ be a solution of LCP, $ \nabla_{\mathbf{X}}{F}_{0}(\mathbf{X}^{*})$ \eqref{0854356} is invertible and $ \mathbf{X}^{*} $ verifies the strict complementarity (i.e. $ \exists~ \alpha >0 $  such that $ x^{*}_{i}+z^{*}_{i}>\alpha$, $\forall i \in \{1, ..., n\} $). Then  $ \lim\limits_{\substack{r \rightarrow 0}}\nabla_{\mathbb{X}}	\mathbb{F}_{s}(\mathbf{X}^{*},r)  $ is invertible, i.e. the two Jacobian matrices are singular or nonsigular at the same time.
\end{theorem}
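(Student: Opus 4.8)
The plan is to compute $\lim_{r\searrow 0}\nabla_{\mathbb{X}}\mathbb{F}_{s}(\mathbf{X}^{*},r)$ entry by entry, show that the limiting $(2n+1)\times(2n+1)$ matrix is block triangular with a nonzero corner supplied by the $\varepsilon r$ term, and then identify its surviving $2n\times 2n$ block with the indicator matrix $J_{l}$ of Lemma~\ref{22786633}.

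First I would use strict complementarity to partition $\{1,\dots,n\}$ into $\mathcal{A}=\{i:\ x_{i}^{*}>0,\ z_{i}^{*}=0\}$ and $\mathcal{B}=\{i:\ x_{i}^{*}=0,\ z_{i}^{*}>0\}$. Since $x_{i}^{*}+z_{i}^{*}>\alpha>0$ these sets are disjoint and exhaust the indices, and, decisively, $x_{i}^{*}-\rho z_{i}^{*}\neq 0$ for every $i$ (it equals $x_{i}^{*}>0$ on $\mathcal{A}$ and $-\rho z_{i}^{*}<0$ on $\mathcal{B}$, because $\rho>0$). Writing $t_{i}:=(x_{i}^{*}-\rho z_{i}^{*})/r$, this gives $t_{i}\to+\infty$ on $\mathcal{A}$ and $t_{i}\to-\infty$ on $\mathcal{B}$, which is all that is needed to pass to the limit in each diagonal entry.

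Next I would take the limit block by block. The $(2,1)$ block $\mathrm{diag}\big(1/(1+e^{t_{i}})\big)$ tends to the diagonal indicator $\Pi_{\mathcal{B}}$ (ones on $\mathcal{B}$, zeros on $\mathcal{A}$) and the $(2,2)$ block $\mathrm{diag}\big(\rho e^{t_{i}}/(1+e^{t_{i}})\big)$ tends to $\rho\,\Pi_{\mathcal{A}}$. For the $(2,3)$ column $V\mathbf{e}$, an asymptotic expansion of $V_{i}=-\log(1+e^{t_{i}})+t_{i}e^{t_{i}}/(1+e^{t_{i}})$ gives $V_{i}\sim-(1+t_{i})e^{-t_{i}}\to0$ on $\mathcal{A}$ and $V_{i}\to0$ on $\mathcal{B}$, so $V\mathbf{e}\to0$. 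Because $\mathbf{X}^{*}$ solves the LCP we have $x^{*},z^{*}\ge0$, hence $x^{-}=z^{-}=0$ and the $(3,1),(3,2)$ blocks are already zero, while the corner entry of the regularized system is $2r+\varepsilon\to\varepsilon>0$. Therefore
\[
\lim_{r\searrow 0}\nabla_{\mathbb{X}}\mathbb{F}_{s}(\mathbf{X}^{*},r)=
\begin{pmatrix} M & -I & 0\\ \Pi_{\mathcal{B}} & \rho\,\Pi_{\mathcal{A}} & 0\\ 0 & 0 & \varepsilon\end{pmatrix},
\]
and a cofactor expansion along the last column gives determinant $\varepsilon\,\det(J_{\rho})$ with $J_{\rho}=\left(\begin{smallmatrix}M&-I\\ \Pi_{\mathcal{B}}&\rho\Pi_{\mathcal{A}}\end{smallmatrix}\right)$.

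It remains to compare $J_{\rho}$ with $J:=\nabla_{\mathbf{X}}F_{0}(\mathbf{X}^{*})=\left(\begin{smallmatrix}M&-I\\ Z^{*}&X^{*}\end{smallmatrix}\right)$. Both $(Z^{*},X^{*})$ and $(\Pi_{\mathcal{B}},\rho\Pi_{\mathcal{A}})$ are strictly complementary diagonal pairs sharing the same nonzero pattern, namely $\phi(Z^{*})=\phi(\Pi_{\mathcal{B}})=\Pi_{\mathcal{B}}$ and $\phi(X^{*})=\phi(\rho\Pi_{\mathcal{A}})=\Pi_{\mathcal{A}}$. Hence Lemma~\ref{22786633} applies to $J$ and to $J_{\rho}$ with the single common indicator matrix $J_{l}=\left(\begin{smallmatrix}M&-I\\ \Pi_{\mathcal{B}}&\Pi_{\mathcal{A}}\end{smallmatrix}\right)$, so $J$ is singular $\iff J_{l}$ is singular $\iff J_{\rho}$ is singular. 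Since $\varepsilon>0$, the limiting Jacobian is invertible iff $J_{\rho}$, equivalently $J=\nabla_{\mathbf{X}}F_{0}(\mathbf{X}^{*})$, is invertible --- which is exactly the hypothesis.

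I expect the main obstacle to be the $(2,3)$ column: one must verify that the logarithmic and rational terms in $V_{i}$ cancel to leading order so the entire last column above the corner collapses to zero. Once that is in hand, the $\varepsilon r$ regularization is what saves the argument --- without it the corner entry $2r$ would vanish in the limit and the matrix would be singular irrespective of $M$ --- and the reduction to Lemma~\ref{22786633} is merely bookkeeping of the nonzero diagonal pattern.
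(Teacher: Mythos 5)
Your proposal is correct and follows essentially the same route as the paper's proof: compute the limiting Jacobian blockwise under strict complementarity (the two index cases giving the indicator pattern), note that $x^{-}=z^{-}=0$ so the last row tends to $(0,\,0,\,\varepsilon)$ and the last column above the corner vanishes, and then reduce invertibility of the surviving $2n\times 2n$ block to Lemma~\ref{22786633}. The only difference is cosmetic: you keep $\rho>0$ general and invoke the lemma a second time to identify $J_{\rho}$ with the common indicator matrix, whereas the paper simply fixes $\rho=1$ so that the limit block is exactly $\left(\begin{smallmatrix}M&-I\\ \phi(Z^{*})&\phi(X^{*})\end{smallmatrix}\right)$.
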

\begin{proof}
	Let 
	\begin{equation*} 
	\mathbb{F}_{s}(\mathbb{X})=\left [
	\begin{array}{llllll} 
	\mathbb{F}_{s,~1}(\mathbb{X}) \\
	\mathbb{F}_{s,~2}(\mathbb{X}) \\
	\mathbb{F}_{s,~3}(\mathbb{X})
	\end{array}
	\right ]=\left [
	\begin{array}{llllll} 
	Mx+q-z \\
	x-r\log\left(1+e^{\dfrac{x-\rho z}{r}}\right) \\
	\frac{1}{2} \Vert x^{-}\Vert^{2}+ \frac{1}{2} \Vert z^{-}\Vert^{2}+r^{2}+\varepsilon r
	\end{array}
	\right ].
	\end{equation*}
	Since $ r $ is now considered as a variable and  the scalar function $ t \mapsto \frac{1}{2}  \vert \min(t,0)\vert^{2} $ is differentiable and its derivative is equal to $ \min(t,0) $. From this observation, it follows that 
	\begin{equation*}
	\nabla_{\mathbb{X}}	\mathbb{F}_{s}(\mathbb{X})=
	\begin{pmatrix}
	M_{n\times n} & -I_{n \times n} & 0_{n \times 1}  \\
	\nabla_{x} \mathbb{F}_{s,2}(\mathbb{X})	& \nabla_{z} \mathbb{F}_{s,2}(\mathbb{X})& \nabla_{r} \mathbb{F}_{s,2}(\mathbb{X}) \\
	(x^{-})^{\text{T}}&(z^{-})^{\text{T}}& 2r+\varepsilon  \\
	\end{pmatrix},
	\end{equation*}
	where $ x^{-} $ is the vector of components $ x_{i}^{-}=\min(x_{i},0) $ and similarly for $ z^{-}$. Since $ \mathbf{X}^{*}=(x^{*},z^{*}) $ is a solution of LCP, we have
	
	\begin{enumerate}
		\item The derivative of $ \mathbb{F}_{s,~2}(\mathbf{X},r) $ with respect to $ x$  is:
		\begin{equation*}
		\nabla_{x} \mathbb{F}_{s,2} (x^{*},z^{*},r)=\text{diag}\left(\dfrac{1}{1+e^{\dfrac{x^{*}-\rho z^{*}}{r}}}\right)_{n \times n}, 
		\end{equation*}
		when $ r $ goes to $ 0 $ and in view of the strict complementary we have to consider two cases:
		\begin{itemize}
			\item  $ x_{i}^{*}\to 0, $ and $ z_{i}^{*}>0 $ $\forall i \in \{1, ..., n\} $ then 
		\end{itemize}
		$ ~~~~~$   $ \lim\limits_{\substack{r \rightarrow 0 \\ x^{*}_{i}\rightarrow 0}}\nabla_{x} 	\mathbb{F}_{s,2} (x_{i}^{*},z_{i}^{*},r)=  \lim\limits_{\substack{r \to 0}}\dfrac{1}{1+e^{-\frac{\rho z_{i}^{*}}{r}}}=1.$
		\begin{itemize}
			\item  $ x_{i}^{*}>0, $ and $ z_{i}^{*} \to 0 $ $\forall i \in \{1, ..., n\} $ then 
		\end{itemize}
		$ ~~~~~$   $ \lim\limits_{\substack{r \rightarrow 0 \\ z^{*}_{i}\rightarrow 0}} \nabla_{x}\mathbb{F}_{s,2} (x_{i}^{*},z_{i}^{*},r)=\lim\limits_{\substack{r \rightarrow 0 }}\dfrac{1}{1+e^{\frac{ x_{i}^{*}}{r}}}=0.$
		\item The derivative of  $ \mathbb{F}_{s,~2}(\mathbf{X},r) $ with respect to $ z$ is: 	
		\begin{equation*}
		\nabla_{z} \mathbb{F}_{s,2} (x^{*},z^{*},r)=\text{diag}\left(\dfrac{\rho e^{\dfrac{x^{*}-\rho z^{*}}{r}}}{1+e^{\dfrac{x^{*}-\rho z^{*}}{r}}}\right)_{n \times n},   
		\end{equation*}
		when $ r $  goes to $ 0 $ and in view of the strict complementary we have to consider two cases:
		\begin{itemize}
			\item  $ x_{i}^{*} \to 0, $ and $ z_{i}^{*}>0 $ $\forall i \in \{1, ..., n\} $ then 
		\end{itemize}
		$ ~~~~~$   $ \lim\limits_{\substack{r \rightarrow 0 \\ x^{*}_{i}\rightarrow 0}}\nabla_{z} 	\mathbb{F}_{s,2} (x_{i}^{*},z_{i}^{*},r)=  \lim\limits_{\substack{r \rightarrow 0}}\rho \dfrac{e^{-\frac{\rho z_{i}^{*}}{r}}}{1+e^{-\frac{\rho z_{i}^{*}}{r}}}=0.$
		\begin{itemize}
			\item  $ x_{i}^{*}>0, $ and $ z_{i}^{*} \to 0 $ $\forall i \in \{1, ..., n\} $ then 
		\end{itemize}
		$ ~~~~~$   $ \lim\limits_{\substack{r \rightarrow 0 \\ z^{*}_{i}\rightarrow 0}}\nabla_{z} 	\mathbb{F}_{s,2} (x_{i}^{*},z_{i}^{*},r)=  \lim\limits_{\substack{r \rightarrow 0}}\rho \dfrac{e^{\frac{ x_{i}^{*}}{r}}}{1+e^{\frac{x_{i}^{*}}{r}}}=\rho~~~$ ~~~ (thereafter, we fixed $ \rho=1 $).
		
		\item The derivative of  $ \mathbb{F}_{s,~2}(\mathbf{X},r) $  with respect to $ r$ is: 
		\begin{equation*}
		\nabla_{r}\mathbb{F}_{s,2}(x^{*},z^{*},r)=\left(-\log(1+e^{\dfrac{x^{*}-\rho z^{*}}{r}})+\dfrac{\dfrac{x^{*}-\rho z^{*}}{r}e^{\dfrac{x^{*}-\rho z^{*}}{r}}}{1+e^{\dfrac{x^{*}-\rho z^{*}}{r}}}    \right)_{n \times 1},
		\end{equation*}
		when $ r $  goes to $ 0 $ and in view of the strict complementary we have to consider two cases:
		\begin{itemize}
			\item $ x_{i}^{*} \to 0, $ and $ z_{i}^{*}>0 $  $\forall i \in \{1, ..., n\}$  then 
		\end{itemize}
		\begin{equation*}
		\lim\limits_{\substack{r \rightarrow 0 \\ x^{*}_{i}\rightarrow 0}}\nabla_{r} 	\mathbb{F}_{s,2} (x_{i}^{*},z_{i}^{*},r)
		=\lim_{r \to 0}\left[-\log(1+e^{-\frac{\rho z_{i}^{*}}{r}})-\frac{\rho z_{i}^{*}}{r}\dfrac{e^{-\frac{\rho z_{i}^{*}}{r}}}{1+e^{-\frac{\rho z_{i}^{*}}{r}}}\right]=0.
		\end{equation*}	
		\begin{itemize}
			\item  $ x_{i}^{*}>0, $ and $ z_{i}^{*} \to 0 $  $\forall i \in \{1, ..., n\}$  then 
		\end{itemize}
		\begin{equation*}
		\lim\limits_{\substack{r \rightarrow 0 \\ z^{*}_{i}\rightarrow 0}}\nabla_{r} 	\mathbb{F}_{s,2} (x_{i}^{*},z_{i}^{*},r)=\lim_{r \to 0}\left[-\log(1+e^{\frac{x_{i}^{*}}{r}})+\frac{x_{i}^{*}}{r}\dfrac{e^{\frac{ x_{i}^{*}}{r}}}{1+e^{\frac{ x_{i}^{*}}{r}}}\right]=0.
		\end{equation*}	
	\end{enumerate}
	Finally, thanks to the assumption $ \mathbf{X}^{*}=(x^{*},z^{*}) $ is a solution of LCP, we have $ x^{*}\geq 0 $ and  $ z^{*}\geq 0 $, so that $ x^{-}=z^{-}=0. $ Hence 
	
	\begin{equation*}
	\lim_{r \to 0}\nabla_{\mathbb{X}}	\mathbb{F}_{s}(\mathbf{X}^{*},r) =\begin{bmatrix}
	\begin{pmatrix}
	M_{n\times n} & -I_{n \times n}  \\
	\phi(Z^{*}) & \phi(X^{*}) \\
	\end{pmatrix}
	&        
	\begin{matrix}
	0 \\[3mm]
	0 \\[3mm]
	\end{matrix}
	\\ 
	0\quad  \quad  \quad 0      &      \varepsilon  \\
	\end{bmatrix},
	\end{equation*}
	and 
	\begin{equation*}
	\lim_{r \to 0} \left\vert\nabla_{\mathbb{X}}	\mathbb{F}_{s}(\mathbf{X}^{*},r)\right 
	\vert   
	=\varepsilon \left\vert
	\left(\begin{array}{cc}
	M&-I\\
	\phi(Z^{*})& \phi(X^{*})
	\end{array}\right) \right 
	\vert,
	\end{equation*}
	where $\phi(.)$ is defined in Lemma \eqref{22786633}, $ Z^{*}=\text{diag}(z^{*}) $ and $ X^{*}=\text{diag}(x^{*})$.\\
	In view of Lemma \eqref{22786633}, we conclude that if  $ \nabla_{\mathbf{X}}F_{0}(\mathbf{X}^{*})$ is invertible  then $\lim\limits_{\substack{r \rightarrow 0}}\nabla_{\mathbb{X}}\mathbb{F}_{s}(\mathbf{X}^{*},r)  $ is invertible.
\end{proof}
Here we present the same result but for the system $ \mathbb{F}_{\theta}(\mathbb{X})$. 
\begin{theorem}
	Suppose that $\mathbf{X}^{*}=(x^{*},z^{*})$ be a solution of LCP, $ \nabla_{\mathbf{X}}{F}_{0}(\mathbf{X}^{*})$ \eqref{0854356} is invertible and $ \mathbf{X}^{*} $ verifies the strict complementarity (i.e. $ \exists~ \alpha >0 $  such that $ x^{*}_{i}+z^{*}_{i}>\alpha$, $\forall i \in \{1, ..., n\} $). Then  $ \lim\limits_{\substack{r \rightarrow 0}}\nabla_{\mathbb{X}}	\mathbb{F}_{\theta}(\mathbf{X}^{*},r)  $ is invertible, i.e. the two Jacobian matrices are singular or nonsigular at the same time.
\end{theorem}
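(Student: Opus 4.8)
The plan is to follow, almost verbatim, the argument already used for Theorem~\ref{765699}, replacing the Soft-Max block $\mathbb{F}_{s,2}$ by the $\theta$-block $\mathbb{F}_{\theta,2}(\mathbb{X})=r(\theta_{r}^{1}(x)+\theta_{r}^{1}(z)-\mathbf{1})$. First I would record the Jacobian in the form
\begin{equation*}
\nabla_{\mathbb{X}}\mathbb{F}_{\theta}(\mathbb{X})=
\begin{pmatrix}
M & -I & 0  \\
\nabla_{x}\mathbb{F}_{\theta,2} & \nabla_{z}\mathbb{F}_{\theta,2} & \nabla_{r}\mathbb{F}_{\theta,2} \\
(x^{-})^{\text{T}} & (z^{-})^{\text{T}} & 2r+\varepsilon  \\
\end{pmatrix},
\end{equation*}
and, using $\theta_{r}^{1}(t)=t/(t+r)$, verify componentwise that $\nabla_{x_{i}}\mathbb{F}_{\theta,2}=r^{2}/(x_{i}+r)^{2}$, that $\nabla_{z_{i}}\mathbb{F}_{\theta,2}=r^{2}/(z_{i}+r)^{2}$, and that the chain rule yields $\nabla_{r}\mathbb{F}_{\theta,2}=x_{i}^{2}/(x_{i}+r)^{2}+z_{i}^{2}/(z_{i}+r)^{2}-1$, which reproduces the diagonal entries and the matrix $W$ displayed earlier for $\nabla_{\mathbb{X}}\mathbb{F}_{\theta}$.

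Next I would evaluate these three entries at the solution $\mathbf{X}^{*}=(x^{*},z^{*})$ and let $r\searrow 0$, splitting via strict complementarity into the cases $x_{i}^{*}=0,\ z_{i}^{*}>0$ and $x_{i}^{*}>0,\ z_{i}^{*}=0$. The decisive observation is purely rational this time: when $x_{i}^{*}=0$ the quantity $r^{2}/(x_{i}^{*}+r)^{2}$ equals $1$ identically while $r^{2}/(z_{i}^{*}+r)^{2}\to 0$, and symmetrically when $z_{i}^{*}=0$, so the $(x,z)$-part of the middle row tends exactly to $(\phi(Z^{*}),\phi(X^{*}))$ with $\phi$ as in Lemma~\ref{22786633}. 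The step I expect to be the main obstacle is confirming that the $r$-column tends to zero, since that is precisely what decouples the limiting Jacobian: in the first case $\nabla_{r}\mathbb{F}_{\theta,2}$ has vanishing first summand and second summand $\to 1$, in the second case the roles swap, so in either case the limit is $1-1=0$. (By contrast with the Soft-Max setting, no logarithmic cancellation is needed here, which makes this verification milder.)

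Finally, because $\mathbf{X}^{*}$ solves the LCP we have $x^{*}\geq 0$ and $z^{*}\geq 0$, hence $x^{-}=z^{-}=0$ and the bottom-left blocks vanish, while $2r+\varepsilon\to\varepsilon$. The limiting matrix is therefore block diagonal, with diagonal blocks $\left(\begin{smallmatrix}M&-I\\ \phi(Z^{*})&\phi(X^{*})\end{smallmatrix}\right)$ and the scalar $\varepsilon$, so that
\begin{equation*}
\lim_{r\to 0}\left\vert \nabla_{\mathbb{X}}\mathbb{F}_{\theta}(\mathbf{X}^{*},r)\right\vert
=\varepsilon\left\vert\begin{array}{cc} M & -I \\ \phi(Z^{*}) & \phi(X^{*}) \end{array}\right\vert.
\end{equation*}
Since $\varepsilon>0$, invertibility of this limit is equivalent to nonsingularity of $J_{l}=\left(\begin{smallmatrix}M&-I\\ \phi(Z^{*})&\phi(X^{*})\end{smallmatrix}\right)$, and Lemma~\ref{22786633} identifies this with nonsingularity of $J=\nabla_{\mathbf{X}}F_{0}(\mathbf{X}^{*})$, which is invertible by hypothesis. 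This closes the argument and shows that the two Jacobians are singular or nonsingular simultaneously.
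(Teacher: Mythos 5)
Your proposal is correct and follows essentially the same route as the paper's own proof: compute the three partial derivatives of the $\theta$-block, take entrywise limits under strict complementarity to obtain the limiting matrix with blocks $\phi(Z^{*})$, $\phi(X^{*})$ and corner $\varepsilon$, and conclude via Lemma~\ref{22786633}. If anything, your treatment is slightly tighter: the paper inserts a detour through the set $S$ and an $o(r)$ estimate (unnecessary at an exact solution, where $x_{i}^{*}=0$ or $z_{i}^{*}=0$ exactly), and for the $r$-column it only verifies boundedness of $W$'s entries in $[-1,1]$, whereas you show that column actually tends to $0$, which is precisely what the claimed limiting Jacobian requires.
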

\begin{proof}
	As the theorem \eqref{765699}, we have to show that 
	\begin{equation*}
	\lim_{r \to 0}\nabla_{\mathbb{X}}	\mathbb{F}_{\theta}(\mathbf{X}^{*},r) =\begin{bmatrix}
	\begin{pmatrix}
	M_{n\times n} & -I_{n \times n}  \\
	\phi(Z^{*}) & \phi(X^{*})\\
	\end{pmatrix}
	&        
	\begin{matrix}
	0 \\[3mm]
	0 \\[3mm]
	\end{matrix}
	\\ 
	0\quad  \quad  \quad 0      &      \varepsilon  \\
	\end{bmatrix}.
	\end{equation*}
	Let us consider the set $ S $ defined as
	\begin{equation*}
	S=\{  (x_{i},z_{i},r)/ ~~ \theta^{1}_{r}(x_{i})+\theta^{1}_{r}(z_{i})=1,  \forall i \in \{1, ..., n\} \}, 	
	\end{equation*}
	by Lemma \eqref{120967}, we have 
	\begin{equation*}
	\theta^{1}_{r}(x_{i})+\theta^{1}_{r}(z_{i})=1 \iff x_{i}z_{i}=r^{2}, \quad  \forall i \in \{1, ..., n\}.
	\end{equation*}
	So, we have 
	\begin{equation*}
	S=\{ (x_{i},z_{i},r)/~~ \theta^{1}_{r}(x_{i})+\theta^{1}_{r}(z_{i})=1, \forall i \in \{1, ..., n\} \}=\{ (x_{i},z_{i},r)/~~ x_{i}z_{i}-r^{2}=0,\forall i \in\{1, ..., n\} \}. 
	\end{equation*}
	Since $ \mathbf{X}^{*}=(x^{*},z^{*}) $ is a solution of LCP, we deduce that  
	$(x^{*},z^{*},r)  $  is near to $ S $, then 
	\begin{equation*}
	x^{*}z^{*}-r^{2}= \smallO{r},
	\end{equation*}
	i.e. $ x^{*}z^{*}-r^{2}$ is negligent by $ r.$ In view of the assumption of the strict complementary we have to consider two  cases if $ z^{*}_{i} >0 $ then $ x^{*}_{i}=\smallO{r} $ and if $x^{*}_{i}>0$ then $ z^{*}_{i}=\smallO{r}.$ Let

	\begin{equation*} 
	\mathbb{F}_{\theta}(\mathbb{X})=\left [
	\begin{array}{llllll} 
	\mathbb{F}_{\theta,~1}(\mathbb{X}) \\
	\mathbb{F}_{\theta,~2}(\mathbb{X}) \\
	\mathbb{F}_{\theta,~3}(\mathbb{X})
	\end{array}
	\right ]=\left [
	\begin{array}{llllll} 
	Mx+q-z \\
	\dfrac{rx}{x+r}+\dfrac{rz}{z+r}-r\mathbf{1} \\
	\frac{1}{2} \Vert x^{-}\Vert^{2}+ \frac{1}{2} \Vert z^{-}\Vert^{2}+r^{2}+\varepsilon r
	\end{array}
	\right ].
	\end{equation*}
	The jacobian matrix of $\mathbb{F}_{\theta} $ is:
	\begin{equation*}
	\nabla_{\mathbb{X}}	\mathbb{F}_{\theta}(\mathbb{X})=
	\begin{pmatrix}
	M_{n\times n} & -I_{n \times n} & 0_{n \times 1}  \\
	\nabla_{x} \mathbb{F}_{\theta,2}(\mathbb{X})	& \nabla_{z} \mathbb{F}_{\theta,2}(\mathbb{X})& \nabla_{r} \mathbb{F}_{\theta,2}(\mathbb{X}) \\
	(x^{-})^{\text{T}}&(z^{-})^{\text{T}}& 2r+\varepsilon  \\
	\end{pmatrix},
	\end{equation*}
	Since $ \mathbf{X}^{*}=(x^{*},z^{*}) $ is a solution of LCP, we have
	\begin{enumerate}
		\item The derivative of $ \mathbb{F}_{\theta,~2}(\mathbf{X},r) $ with respect to $ x$ is:  
		\begin{equation*}
		\nabla_{x} \mathbb{F}_{\theta,2} (x^{*},z^{*},r)=\text{diag}\left(\left(\dfrac{r}{x^{*}+r}\right)^{2}\right)_{n \times n}, 
		\end{equation*}
		when $ r $  goes to $ 0 $ and in view of the strict complementary we have to consider two cases:
		\begin{itemize}
			\item  $ x_{i}^{*} \to 0, $ and $ z_{i}^{*}>0~~ $  $\forall i \in \{1, ..., n\}$ then \\
			\begin{equation*}
			\begin{split}
			\lim\limits_{\substack{r \rightarrow 0 \\ x_{i}^{*}\rightarrow 0}}\nabla_{x} \mathbb{F}_{\theta,2} (x_{i}^{*},z_{i}^{*},r)=&  \lim\limits_{\substack{r \rightarrow 0}} \left(\dfrac{r}{\smallO{r}+r}\right)^{2}\\
			=& \lim\limits_{\substack{r \rightarrow 0 }} \left(\dfrac{r}{r}\right)^{2}=1.
			\end{split}
			\end{equation*}
		\end{itemize} 
		\begin{itemize}
			\item  $ x_{i}^{*}>0, $ and $ z_{i}^{*} \to 0~~ $ $\forall i \in \{1, ..., n\}$ then 
		\end{itemize}
		\begin{equation*}
		\lim\limits_{\substack{r \rightarrow 0 \\ z_{i}^{*}\rightarrow 0}}\nabla_{x} 	\mathbb{F}_{\theta,2} (x_{i}^{*},z_{i}^{*},r)=\lim\limits_{\substack{r \rightarrow 0\\ z_{i}^{*}\rightarrow 0}} \left(\dfrac{r}{x_{i}^{*}+r}\right)^{2}=0.
		\end{equation*}
		\item The derivative of $ \mathbb{F}_{\theta,~2}(\mathbf{X},r) $ with respect to $ z $ is: 
		\begin{equation*}
		\nabla_{z} \mathbb{F}_{\theta,2} (x^{*},z^{*},r)=\text{diag}\left(\left(\dfrac{r}{z^{*}+r}\right)^{2}\right)_{n \times n},   
		\end{equation*}
		when $ r $  goes to $ 0 $ and in view of the strict complementary we have to consider two cases:
		\begin{itemize}
			\item  $ x_{i}^{*} \to 0, $ and $ z_{i}^{*}>0~~ $ $\forall i \in \{1, ..., n\}$ then 
		\end{itemize}
		\begin{equation*}
		\lim\limits_{\substack{r \rightarrow 0 \\ x_{i}^{*}\rightarrow 0}}\nabla_{z} 	\mathbb{F}_{\theta,2} (x_{i}^{*},z_{i}^{*},r)=\lim\limits_{\substack{r \rightarrow 0\\ x_{i}^{*}\rightarrow 0}} \left(\dfrac{r}{z_{i}^{*}+r}\right)^{2}=0.
		\end{equation*}
		\begin{itemize}
			\item  $ x_{i}^{*}>0, $ and $ z_{i}^{*} \to 0~~ $ $\forall i \in \{1, ..., n\}$ then 
		\end{itemize}
		\begin{equation*}
		\lim\limits_{\substack{r \rightarrow 0 \\ z_{i}^{*}\rightarrow 0}}\nabla_{z} 	\mathbb{F}_{\theta,2} (x_{i}^{*},z_{i}^{*},r)=\lim\limits_{\substack{r \rightarrow 0}} \left(\dfrac{r}{\smallO{r}+r}\right)^{2}=1.
		\end{equation*}
		\item The derivative of $ \mathbb{F}_{\theta,~2}(\mathbf{X},r) $ with respect to $ r$ is:  
		\begin{equation*}
		\nabla_{r} \mathbb{F}_{\theta,2} (x^{*},z^{*},r)=\left(\left(\frac{x^{*}}{x^{*}+r}\right)^{2}+ \left(\frac{z^{*}}{z^{*}+r}\right)^{2}-\mathbf{1}\right)_{n \times 1},   
		\end{equation*}
		when $ r $  goes to $ 0 $ then we have to prove that the vector $ \dfrac{x^{*2}}{(x^{*}+r)^{2}}+\dfrac{z^{*2}}{(z^{*}+r)^{2}}-\mathbf{1} $  is bounded, since $ 0 \leq \left(\frac{x_{i}^{*}}{x_{i}^{*}+r}\right)^{2} \leq 1 $ and $ 0 \leq \left(\frac{z_{i}^{*}}{z_{i}^{*}+r}\right)^{2} \leq 1~~ $~~ $\forall i \in \{1, ..., n\}$  then we have 
		\begin{equation*}
		-1 \leq \left(\frac{x_{i}^{*}}{x_{i}^{*}+r}\right)^{2}+ \left(\frac{z_{i}^{*}}{z_{i}^{*}+r}\right)^{2}-1  \leq 1, ~~~ \forall i \in \{1, ..., n\}.
		\end{equation*} 
	\end{enumerate}
	Finally, since $ \mathbf{X}^{*}=(x^{*},z^{*}) $ is a solution of LCP, we have $ x^{*}\geq 0 $ and  $ z^{*}\geq 0 $, so that $ x^{-}=z^{-}=0. $
	Hence 
	\begin{equation*}
	\lim_{r \to 0}\nabla_{\mathbb{X}}	\mathbb{F}_{\theta}(\mathbf{X}^{*},r) =\begin{bmatrix}
	\begin{pmatrix}
	M_{n\times n} & -I_{n \times n}  \\
	\phi(Z^{*}) & \phi(X^{*}) \\
	\end{pmatrix}
	&        
	\begin{matrix}
	0 \\[3mm]
	0 \\[3mm]
	\end{matrix}
	\\ 
	0\quad  \quad  \quad 0      &      \varepsilon  \\
	\end{bmatrix},
	\end{equation*}
	and 
	\begin{equation*}
	\lim_{r \to 0} \left\vert\nabla_{\mathbb{X}}	\mathbb{F}_{\theta}(\mathbf{X}^{*},r)\right 
	\vert   
	=\varepsilon \left\vert
	\left(\begin{array}{cc}
	M&-I\\
	\phi(Z^{*})& \phi(X^{*})
	\end{array}\right) \right 
	\vert,
	\end{equation*}
	in view of Lemma \eqref{22786633}, we conclude that if  $ \nabla_{\mathbf{X}}F_{0}(\mathbf{X}^{*})~~ $is invertible  then $ \lim\limits_{\substack{r \rightarrow 0}}\nabla_{\mathbb{X}}	\mathbb{F}_{\theta}(\mathbf{X}^{*},r)  $ is invertible. Hence the two Jacobian matrices are singular or nonsingular at the same time.
\end{proof}
From now on, the enlarged equation $\eqref{889}$ and  $\eqref{88}$   are selected as the reference system in the design of our new algorithms. The idea is simply to apply the standard Newton method to the smooth system $\eqref{889}$ and $\eqref{88}.$ To enforce a global convergence behavior, we also recommend using Armijo's line search. By Lemma \eqref{19624}, we assume that M has  all its principal minors are nonzeros to ensure the convergence of the two algorithms below.\\ 
Now, we present  the new algorithm for our methods described above:
\vspace{0.2cm}\\
\begin{tabular}{llll}
	\hline 
	$ \mathbf{Algorithm~1}$ Nonparametric TLCP with Armijo line search \\
	\hline
	1.~ Chose  $\mathbb{X}^{0} =(\mathbf{X}^{0},r^{0}), ~\mathbf{X}^{0}>0,~r^{0}=<x^{0},z^{0}>/n,~\tau \in (1,1/2), \varsigma \in(0,1).~ $ Set  $k=0. $ \\
	2.~ If $ \mathbb{F}_{\theta}(\mathbb{X}^{k})=0,~$stop.\\
	3.~ Find a direction $ d^{k} \in \R^{2n+1}$ such that \\
	~~~~~~~~~\quad~~~\quad\quad\quad\quad\quad\quad\quad\quad~~~~~~~~~~~~\quad~~~~~~~~~~~~~~~~~~~~~~~~~~~~~~~~~~~~~~~~~~~~~~~~~~~~~$~~~~~~~~~~~~~~~~~~~~~~~\quad~~~~~~~~~~\mathbb{F}_{\theta}(\mathbb{X}^{k})+\nabla_{\mathbb{X}}\mathbb{F}_{\theta}(\mathbb{X}^{k})d^{k}=0.  $\\
	4.~ Choose $ \alpha^{k}=\varsigma^{j_{k}}\in (0,1),~ $ where  $ j_{k} \in \N $ is the smallest integer such that \\
	~~~~~~~~~\quad~~~\quad\quad\quad\quad\quad\quad\quad\quad~~~~~~~~~~~~\quad~~~~~~~~~~~~~~~~~~~~~~~~~~~~~~~~~~~~~~~~~~~~~~~~~~~~~
	$~~~~~~~~~~~~~~~~~~~~~~~~~~~~~~~~~~~~~~~~\Theta_{\theta}(\mathbb{X}^{k}+\varsigma^{j_{k}}d^{k})\leq (1-2\tau\varsigma^{j_{k}})~\Theta_{\theta}(\mathbb{X}^{k}).  $\\
	5. ~Set $ \mathbb{X}^{k+1}=\mathbb{X}^{k}+\alpha^{k}d^{k}~ $ and $ k\gets k+1.~ $Go to step $ 2.$ \\
	\hline
	
\end{tabular}
\\
\\

\begin{tabular}{llll}
	\hline
	$  \mathbf{Algorithm~2}$ Nonparametric Soft-LCP method with Armijo line search \\
	\hline
	1.~ Chose  $\mathbb{X}^{0} =(\mathbf{X}^{0},r^{0}), ~\mathbf{X}^{0}>0,~r^{0}=<x^{0},z^{0}>/n,~\tau \in (1,1/2), \varsigma \in(0,1).~ $ Set  $k=0. $ \\
	2.~ If $ \mathbb{F}_{s}(\mathbb{X}^{k})=0,~$stop.\\
	3.~ Find a direction $ d^{k} \in \R^{2n+1}$ such that \\
	~~~~~~~~~\quad~~~\quad\quad\quad\quad\quad\quad\quad\quad~~~~~~~~~~~~\quad~~~~~~~~~~~~~~~~~~~~~~~~~~~~~~~~~~~~~~~~~~~~~~~~~~~~~$~~~~~~~~~~~~~~~~~~~~~~~~~~~~~~~~~~~~~~~~\mathbb{F}_{s}(\mathbb{X}^{k})+\nabla_{\mathbb{X}}\mathbb{F}_{s}(\mathbb{X}^{k})d^{k}=0.  $\\
	4.~ Choose $ \alpha^{k}=\varsigma^{j_{k}}\in (0,1),~ $ where  $ j_{k} \in \N $ is the smallest integer such that \\
	~~~~~~~~~\quad~~~\quad\quad\quad\quad\quad\quad\quad\quad~~~~~~~~~~~~\quad~~~~~~~~~~~~~~~~~~~~~~~~~~~~~~~~~~~~~~~~~~~~~~~~~~~~~
	$~~~~~~~~~~~~~~~~~~~~~~~~~~~~~~~~~~~~~~~~\Theta_{s}(\mathbb{X}^{k}+\varsigma^{j_{k}}d^{k})\leq (1-2\tau\varsigma^{j_{k}})~\Theta_{s}(\mathbb{X}^{k}).  $\\
	5. ~Set $ \mathbb{X}^{k+1}=\mathbb{X}^{k}+\alpha^{k}d^{k}~ $ and $ k\gets k+1.~ $Go to step $ 2.$ \\
	\hline	
\end{tabular}
\\

where the merit function used in the line search is  
\begin{equation*}
\Theta_{\theta}(\mathbb{X})=\frac{1}{2}\Vert \mathbb{F}_{\theta}(\mathbb{X}) \Vert^{2}~~  (resp. ~~ \Theta_{s}(\mathbb{X})=\frac{1}{2}\Vert \mathbb{F}_{s}(\mathbb{X}) \Vert^{2}).
\end{equation*}
\section{Numerical Results}
Through this article, we studied two methods Soft-LCP and TLCP to solve the LCP, we present in this section some numerical experiments. First, we present a comparison on some randomly generated problems of our two methods with other approaches that have been suggested recently in \cite{El Ghami, Tran}. \\Then, we study two concrete examples, the first one is a second order ordinary differential equation and the second is an obstacle problem that can be formulated as LCP \eqref{1}.\\
Finally We tested our algorithms on several absolute value equations problems. Our results are very promising and outperform standard methods. \\ 
For all the numerical tests and all the considered methods, the used codes are simple Matlab codes. We restrict our choice of $ \theta$-function to  $ \theta_{r}^{1}(x)=\frac{x}{x+r} $. \\
Our aim is to validate our approach and run some preliminary comparison with other methods, and not to optimize the performance of the algorithm. 
\subsection{Comparisons of methods for LCPs}	
\vspace{0.1cm}
We generate for several problem sizes, n=32,~64,~128,~256 the data $ (M,q) $ in order to have a solution for LCP as follows
\begin{center}
	\colorbox[rgb]{0.74,0.98,1}{
		\begin{minipage}{0.98\textwidth}
			R=\text{rand}(n,~n);\\
			M=R$^{'}$*R+n*\text{eye}(n);\\
			h=\text{rand}(n) ;\\
			z=\text{round}(h).*\text{rand}(n,~1);\\
			t=(1-\text{round}(h)).*\text{rand}(n,~1);\\
			q=-M*t=z.\\ 
	\end{minipage}}
\end{center}
We compare our two methods denoted Soft-LCP and TLCP with other methods:
\begin{itemize}
	\item TLCP2 method which is the same algorithm with a different formulation for the complementarity 
	\begin{equation*}
	\theta_{r}(x_{i})+\theta_{r}(z_{i})-\theta_{r}(x_{i}+z_{i})=0.
	\end{equation*}
	In this case we don't necessarily need the constraint
	\begin{equation*}
	\frac{1}{2} \Vert x^{-}\Vert^{2}+ \frac{1}{2} \Vert z^{-}\Vert^{2}+r^{2}+\varepsilon r=0.
	\end{equation*} 
	since it is a reformulation of the complementarity and not a relaxation (we can use a fixed $ r $).
	\item The classical interior-point method IPM \cite{El Ghami}.
	\item Nonparametric interior-point method NPIPM developped in \cite{Tran}.
\end{itemize}
The main idea of all these methods is to regularize the complementarity condition $ x^{T}z=0$ and solve a system of equations using Newton's method. We use an Infeasible IPM to compare with our methods. Regarding the NPIPM, the technique proposed avoids any parameter management. For TLCP2, we have fixed $ r $ to $ 1 $. We take for all this methods the initial point $ (x_{0},~z_{0})=1,~$ where $ 1 \in \R^{n} $ is the vector whose components are all equal to $ 1 $ and $ r_{0}=\langle x_{0},~z_{0} \rangle/n $ and the precision is set as $ 10^{-6}.$ \\
The comparative results are given in the Table $ 1 $ to $ 5.$ We are interested in the following aspects: the comp.err, computed as $   \vert x^{T}z  \vert$, feas.err computed as $\Vert Mx+q-z \Vert $ the number of iterations nb-iter and the time.
\begin{table}[H]
	\caption{Results from IPM with n=$32, 64, 128, 256.  $ }
	\begin{center}
		\begin{tabular}{cccccc}
			\hline 
			n & comp.err &  feas.err&  r & nb-iter& time \\
			\hline 
			32  & 9.e-7 & 0  & 0  &  198     & 0.5606 \\
			
			64  & 9.e-7  & 1.e-12  & 0  & 212   &  0.9313  \\
			
			128 & 9.e-7 & 4.e-10 & 0  &  248  & 3.6056  \\
			
			256 &  9.e-7 & 0  &  0  &  238  &8.7353  \\
			\hline 
		\end{tabular}
		\label{tab1}
	\end{center}
\end{table}
\begin{table}[H]
	\caption{Results from NPIPM with n=$32, 64, 128, 256.  $ }
	\begin{center}

		\begin{tabular}{cccccc}
			\hline  n & comp.err  &  feas.err&  r & nb-iter& time \\
			\hline
			
			32  & 5.e-7  & 0 & 0  & 22 &0.0541  \\
			64  & 1.5e-5 & 0 & 0  &  165  &   1.4925 \\
			128 & 2.e-7 & 0   & 0 &   230  &2.2394  \\
			256 & 1.2e-8  & 0  & 0  &  281  & 12.8129  \\
			\hline 
		\end{tabular}
		\label{tab2}
	\end{center}	
\end{table}

\begin{table}[H]
	\caption{Results from TLCP with n=$32, 64, 128, 256.  $ }
	\begin{center}
		\begin{tabular}{cccccc}
			\hline  n & comp.err  &  feas.err&  r & nb-iter& time \\
			\hline 
			32  & 2.e-7  & 1.e-13 & 1.e-3 & 10    & 0.1486  \\
			64  & 8.1e-8  & 1.e-12 & 1.e-3 &  10  &  0.0342  \\
			128 & 2.e-7 & 0   & 5.e-4&  11   & 0.0995 \\
			256 & 1.e-7 & 0  &  2.e-4  & 12  & 0.4620  \\
			\hline 
		\end{tabular}
		\label{tab3}
	\end{center}
\end{table}
\begin{table}[H]
	\caption{Results from TLCP2 with n=$32, 64, 128, 256.  $ }
	\begin{center}
		\begin{tabular}{cccccc}
			\hline  n & comp.err &  feas.err&  r & nb-iter& time\\
			\hline  
			32 & 0 & 8.e-9  & 1 & 11    & 0.0154  \\
			64  & 0 & 2.e-9 & 1  &  12  &  0.0429  \\
			128 & 2.e-7 & 1.6e-8  & 1 &  11   & 0.1993  \\
			256 & 8.e-8  & 8.6e-7 &  1  &  38  & 1.6799 \\
			\hline 
		\end{tabular}
		\label{tab4}
	\end{center}
\end{table}
\begin{table}[H]
	\caption{Results from Soft-LCP with n=$32, 64, 128, 256.  $ }
	\begin{center}
		\begin{tabular}{cccccc}
			\hline  n & errcomp &  erfeas&  r & nb-iter& time \\
			\hline 
			32  & 1.e-5 & 0  & 8.e-3 & 14    & 0.0233  \\
			64  & 1.e-6  & 1.e-13 & 5.2e-3  &  18  & 0.0607  \\
			$  128$ & 9.e-7 & 0   & 5.e-3&  20   & 0.3680 \\
			256 & 1.e-6 & 0  &  3.9e-3  &  22 & 0.9068  \\
			\hline 
		\end{tabular}
		\label{tab5}
	\end{center}
\end{table}
In the above comparisons, we notice that our methods have much better results in terms of iteration numbers and CPU-time than classic interior-point-method IPM and NPIPM. The TLCP method requires the fewest iteration numbers.
\subsection{An obstacle problem} \label{98456}
Let $f$ and $g$ two continuous functions defined in  $[0,1]$. We want to solve the following obstacle problem: \\find
$u:[0, 1] \rightarrow \R$ such that:
\begin{equation*}
\left \{ \begin{array}{rcc}
-u^{"}(x) &\geq& f(x)\\
u(x) &\geq& g(x)\\
(-u^{"}(x) -f(x))(u(x) -g(x))&=&0
\end{array}\right. \textrm{on}\, \,]0,1[, 
\end{equation*}
and $ u(0)=u(1)=0.$\\
The first equation means a maximum concavity of the function u. In the second equation, we want the solution $u$ to be above $g$. In the third equation, we have at least equality in one of the two previous equations.
In order to get a linear complementarity problem, we set $z=u-g$ and we discretize by using the finite difference. We introduce a uniform subdivision $x_i = i*h, i=0,\dots N+1$
of $[0, 1]$, where $h =\frac{1}{N + 1}$.\\
We use the second-order centered finite difference to approximate the second order derivatives $ z^{''}(x) $ and $ g^{''}(x).$ We then try to solve the following problem:
\begin{equation*}
\left\{ \begin{array}{rcc}
\frac{-z_{i-1}+2z_{i}-z_{i+1}}{h^{2}}+\frac{-g_{i-1}+2g_{i}-g_{i+1}}{h^{2}}-f_{i}&\geq& 0\\
z_{i} &\geq& 0\\
\left(\frac{-z_{i-1}+2z_{i}+z_{i+1}}{h^{2}}+\frac{-g_{i-1}+2g_{i}-g_{i+1}}{h^{2}} -f_{i}\right)(z_{i})&=&0
\end{array}\right. ,
\end{equation*}
$ \textrm{for}\,i=1...N, u_{0}=u_{N+1}=0. $\\
Where $ g_{i}=g(x_{i}),~f_{i}=f(x_{i}),~z_{i}=z(x_{i})$ and $ u_{i}=u(x_{i}).$ We obtain the following complementarity problem:
\begin{equation*}
\begin{array}{rcc}
(Mz+q)^Tz&=& 0\\
z &\geq& 0\\
Mz+q&\geq&0
\end{array}
\end{equation*}
where
\begin{equation*}
M=\frac{1}{h^2}\left(\begin{array}{cccc}
2&-1& &\\
-1& \ddots & \ddots &\\
& \ddots& \ddots & -1\\
&& -1&2\\
\end{array}\right)
\end{equation*}
and $q=Mg-f$. If $ 1 $ is not an eigenvalue of $ M $ is equivalent to AVE, (\cite{O. L. Mangasarian2}, Prop. 2),
\begin{equation*}
(M-I)^{-1}(M+I)x-\vert x\vert=(M-I)^{-1}q.  
\end{equation*}
We present in the following figures, the results of our two methods and LPM method from \cite{O. L. Mangasarian2}. The obstacle $ g $ is chosen here to be \\ $g(x)=\text{max}(0.8-20*(x-0.2)^2,\text{max}(1-20(x-0.75)^2,1.2-30(x-0.41)^2)),~ f(x)=1$ and $N=50.$

\begin{figure}[H]\label{fig1}
	\begin{minipage}[H]{.46\linewidth}
		\begin{center}
			\includegraphics[width=9cm,height=7cm]{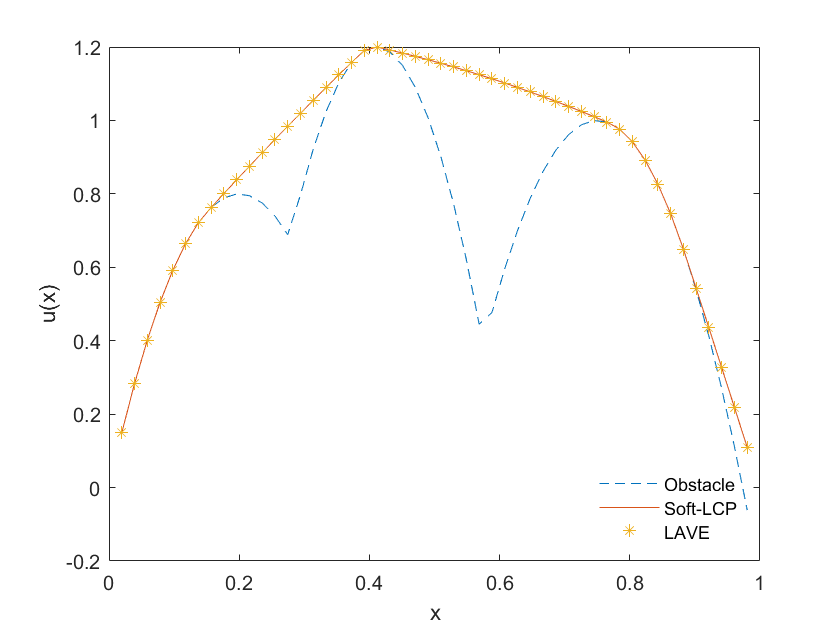}
		\end{center}
	\end{minipage} \hfill
	\begin{minipage}[H]{.46\linewidth}
		\begin{center}
			\includegraphics[width=9cm,height=7cm]{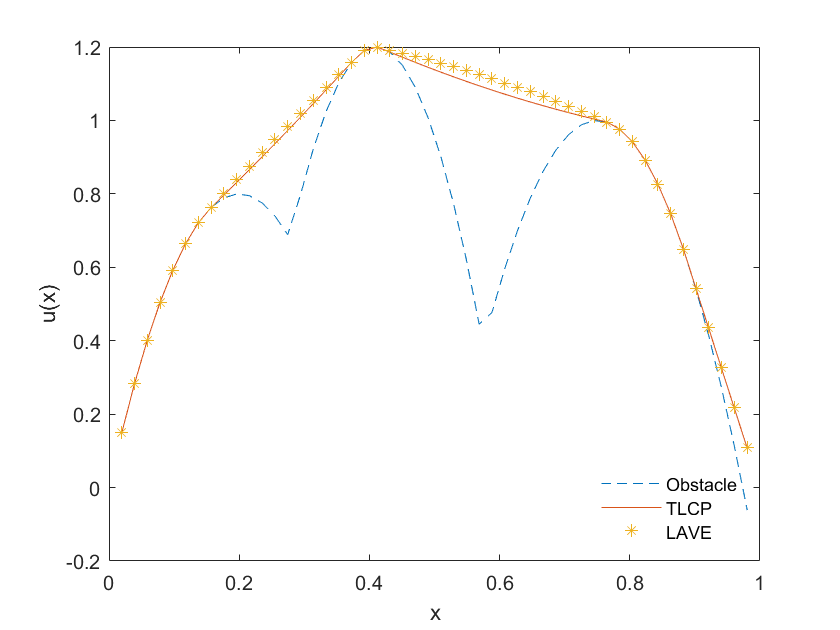}
		\end{center}
	\end{minipage}
	\caption{Numerical solution of the obstacle problem \eqref{98456} with TCLP, Soft-LCP methods, and method from \cite{O. L. Mangasarian2}.}
\end{figure}
We remark that the both TLCP, Soft-LCP, and LPM method \cite{O. L. Mangasarian2} have 19 common points on the curve g and none below g over 50 points. This example also confirms that our approach, TLCP and Soft-LCP  method gives consistent results.
\subsection{An ordinary differential equation} \label{234579800}
We consider the ordinary differential equation 	
\begin{equation}\label{2233}
x^{''}(t)-|x(t)|=-2-t,~~~x(0)=-1, ~~~~x^{'}(0)=1, ~~~ t\in [0,~5].
\end{equation}
First, we discretize the EDO equation by using the finite difference scheme. We use the second-order centred finite difference to approximate the second order derivative
\begin{equation}\label{3333}
\dfrac{x_{i-2}-2x_{i-1}+x_{i}}{h^{2}}-|x_{i}|=(-2-t)_{i}.
\end{equation}
Equation $\eqref{3333}$ was derived with equispace gridpoints $ t_{i}=ih, ~i=1, ...N.$ In order to approximate the Neumann boundary conditions we use a center difference 
\begin{equation}\label{333}
\dfrac{x_{1}-x_{-1}}{2h}=x^{'}(0)=1.
\end{equation}
Using the classical decomposition of the absolute value \cite{Lina Abdallah} we reformulate $\eqref{3333}$ as follows 
\begin{equation}\label{33344}
\left\{
\begin{array}{llllll} 
N_{1}x^{+}-N_{2}x^{-}=q,
~~~~~~~~~~~~~~~~~~~~~~~~~~~~~~~~~~~~~~~~~~~~  \\
0 \leq x^{+} \perp x^{-} \geq 0,
\end{array}
\right.
\end{equation}

where 
\begin{equation*} 
N_{1}=\frac{1}{h^2}\left(\begin{array}{ccccc}
2-h^{2}&         &        &   &     \\
-2  &1-h^{2}  &       &   &    \\
1      & \ddots  & \ddots &   &    \\
&\ddots   &  \ddots      & \ddots  &   \\
&   &1       & -2  &  1-h^{2}\\
\end{array}\right), ~~ N_{2}=\frac{1}{h^2}\left(\begin{array}{ccccc}
2+h^{2}&         &        &   &     \\
-2  &1+h^{2}  &       &   &    \\
1      & \ddots  & \ddots &   &    \\
&\ddots   &  \ddots      & \ddots  &   \\
&   &1       & -2  &  1+h^{2}\\
\end{array}\right),	
\end{equation*}
and 
$  
q=-\frac{1}{h^2}\left(\begin{array}{c}
2-2h\\
-1     \\
\vdots        \\
0\\
\end{array}\right)
-\left(\begin{array}{c}
2+h\\
2+2h \\
\vdots    \\
2+Nh\\
\end{array}\right)
$. \\

\vspace{0.2cm}
$ N_{1} $ is invertible, then the problem $\eqref{33344}$  is reduced to a standard LCP.\\
We compare the obtained solution by Soft-LCP and TLCP to the predefined Runge-Kutta ode45 function in MATLAB \cite{Matlab}. The domain is $ t \in [0, 5] $, initial conditions $ x({0})=-1, x^{'}(0)=1$ and $ N=100.$
\begin{figure}[H]
	\begin{minipage}[H]{.46\linewidth}
		\begin{center}
			\includegraphics[width=9cm,height=7cm]{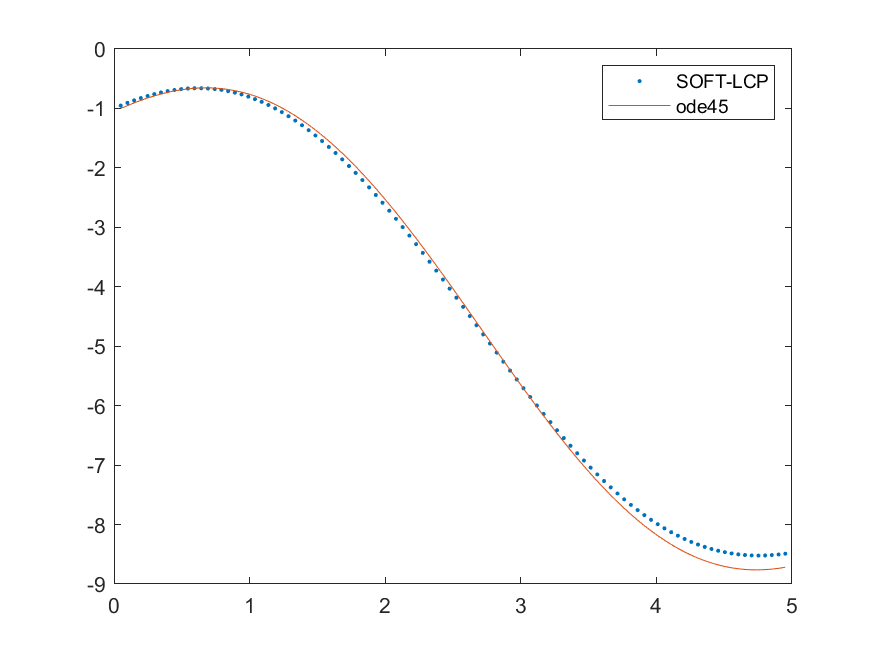}
		\end{center}
	\end{minipage} \hfill
	\begin{minipage}[H]{.46\linewidth}
		\begin{center}
			\includegraphics[width=9cm,height=7cm]{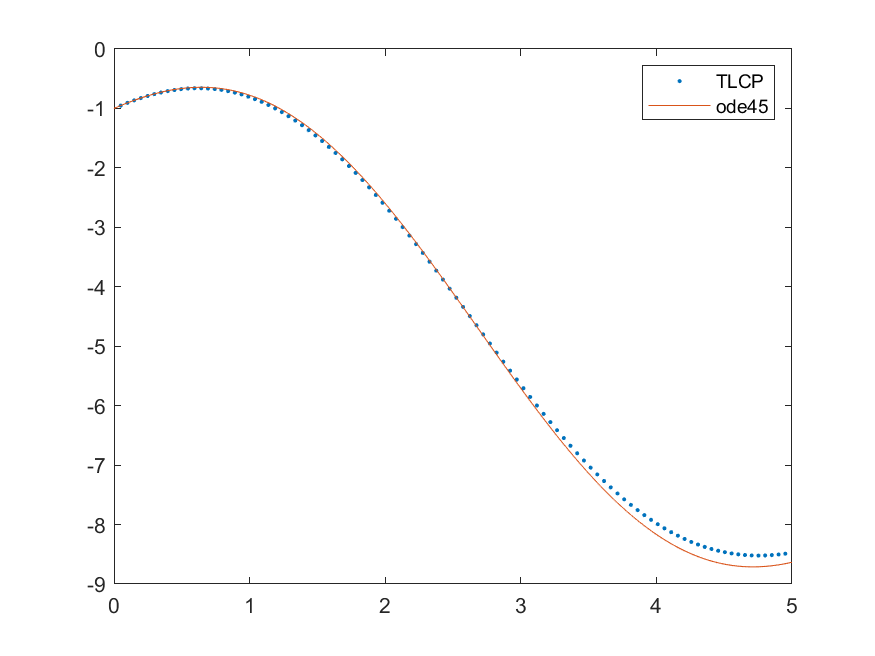}
		\end{center}
	\end{minipage}
	\caption{Numerical solution of  $\eqref{234579800}$ with ode45 and both methods}
\end{figure}
Both methods solve the problem and gives consistent results.
\subsection{Application to Absolute Value Equation }
We consider the absolute value equation AVE, defined as 
\begin{equation} \label{1993}
Ax-\vert x\vert=b,
\end{equation}
with $ A\in \R^{n \times n} $ and $ b\in \R^{n}.$ We studied two  cases where AVE has a unique solution and for general AVE. Using the same technique as in \cite{Lina Abdallah}, \eqref{1993} can be cast as the following complementarity problem 
\begin{equation} 
A(x^{+}-x^{-})-(x^{+}+x^{-})=b,~~~~\quad 0\leq x^{+}\perp x^{-} \geq 0., 
\end{equation}
equivalent to 
\begin{equation} 
(A-I)x^{+}=(A+I)x^{-}+b,~ \quad 0\leq x^{+}\perp x^{-} \geq 0, 
\end{equation}
where $ x^{+}=\text{max}(x,0) $  and  $ x^{-}=\text{max}(-x,0) $. This decompsition guarantes that $|x|=x^{+}+x^{-}.$ So AVE can be cast as the following LCP 
\begin{equation} 
x^{+}=M	x^{-}+q,~ 0\leq x^{+}\perp x^{-} \geq 0, 
\end{equation}
with $ M=(A-I)^{-1}(A+I)$ and $ q=(A-I)^{-1}b. $ 
\subsubsection{Random uniquely solvable generated problem}
We consider the special case where AVE is uniquely solvable, to guarantee the convergence of the Newton method. One way to generate such (AVE) is to generate a matrix $ A $ with singular values exceeding $ 1.$ We first chose a random $ A $ from a uniform distribution on $ [-10, 10], $ then we chose a random $ x $ from a uniform distribution on $ [-1, 1]$. Finally we computed $ b=Ax-\vert x\vert $ . We ensured that the singular values of each $ A $ exceeded $ 1 $ by actually computing the minimum singular value and rescaling $ A $ by dividing it by the minimum singular value multiplied by a random number in the interval $ [0, 1] $. We generate for the several values for $ n=32,~64,~128,~256,~512,~1024$, the data $ (A,~b)$ by the following Matlab code in order to have a solution for AVE:
\begin{center}
	\colorbox[rgb]{0.74,0.98,1}{
		\begin{minipage}{0.98\textwidth}
			n=input('dimension of matrix A=');\\
			R=10*(rand(n,n)-rand(n,n));\\
			A=R/(min(svd(R))*rand(1));\\
			x=rand(n,1)-rand(n,1);\\
			b=A*x-abs(x). 
	\end{minipage}}
\end{center}
The required precision for solving AVE is $ 10^{-6}$. For each $ n$ we consider $ 100$ instances. \\
Now, we compare our methods Soft-LCP and TLCP to Generalized Newton method from \cite{Olvi L. Mangasarian}, which is denoted GN. In this method, we solve each iteration a linear system: 
\begin{equation} 
(A-D(x^{i}))x^{i+1}=b 
\end{equation}
where $ D(x^{i})=\text{diag}(\text{sign}(x^{i})) $.
Results are summarized in Table 6, which gives the number of iterations, the time required to solve all the $ 100$ instances. Our methods solve all $ 100 $ AVEs to an accuracy of $ 10^{-6}$ and validate our approach. We notice that the GN method is the fastest because at each iteration it solves only one linear system, the TLCP method gives the fewest iterations to solve the 100 instances.
\begin{table}[H]
	\caption{Comparison of Soft-LCP and TLCP with GN method, in the case with singular values of $ A$ exceeds $ 1 $ for $ 100 $ randomly generated AVE  of size $ n. $ }
	\begin{center}
		\begin{tabular}{ccccccc}
			\hline  $ n $ & it-Soft-LCP &Time-Soft-LCP&  it-TLCP & Time-TLCP& it-GN& Time-GN \\
			\hline 
			\vspace{0.1cm} 
			32  & 201  & 0.0394  & 104  &  0.0238 & 255 & 0.0071 \\
			64 & 201 & 0.1041   & 107&  0.0646   & 274&  0.0182 \\
			128 & 200 & 0.2844  & 111  &  0.1767 & 274 &  0.0641 \\
			256 & 212 & 1.6986  & 106 &  0.9727  & 290 &  0.2301 \\
			512 & 284 & 11.0947 &  110  &  5.0497  & 295 &  1.2925 \\
			1024 & 284 & 42.1565 &  111  &  45.3930 & 291 &  14.8541 \\
			\hline 
		\end{tabular}
	\end{center}
\end{table}
\subsubsection{Random generated problem}
Now we present results for general AVE, which is the main interest of our algorithm. The data are generated like \cite{O. L. Mangasarian2} for several $ n $ and for several values of the parameteres, in each situation we solve $ 100 $ instances of the problem. We choose a random $ A $ from a uniform distributin on $ [-10, 10] $, then chose a random $ x $ from a uniform distribution on $ [-1, 1] $ and set $ b=Ax-\vert x \vert.$ The data $ (A, b) $ are generated by Matlab script: 
\begin{center}
	\colorbox[rgb]{0.74,0.98,1}{
		\begin{minipage}{0.98\textwidth}
			n=input('dimension of matrix A=');\\
			rand('state',0);\\
			A=10*(rand(n,n)-rand(n,n));\\
			x=rand(n,1)-rand(n,1);\\
			b=A*x-abs(x);
	\end{minipage}}
\end{center}
We will compare 4 methods valid for general AVE:
\begin{itemize}
	\item TLCP method from Algorithm 1;
	\item Soft-LCP method from Algorithm 2;
	\item Concave minimization method CMM from \cite{O. L. Mangasarian2};
	\item Successive linear programming method LPM from \cite{O.L. Mangasarian1};
\end{itemize}

In table 7-10, "nnztot" gives the number of violated expressions for all problems, "nnzx" gives the maximum violated expressions for one problem, "nb-iter" gives the number of iteration for all the problems. We also provide the time in seconds and the number of problems where we did not manage to solve AVE.
\begin{table}[H]
	\caption{Results from TLCP on  with $ 100$ consecutive random AVEs}
	\begin{center}
		\begin{tabular}{cccccc}
			\hline 
			n &nnztot&nnzx &nb-iter &  time& nb-failure\\
			\hline 
			\vspace{0.1cm}
			32  & 3& 1& 1647   & 0.6274  &  3  \\
			
			64  & 5& 1& 1776  & 1.2548  &  5   \\
			
			128 & 5& 1& 2359 & 2.4182    &   7  \\
			
			256 & 8&1 &  2448 & 22.8817  & 8  \\
			\hline 
		\end{tabular}
	\end{center}
\end{table}
\begin{table}[H]
	\caption{Results from Soft-LCP on  with $ 100$ consecutive random AVEs}
	\begin{center}
		\begin{tabular}{cccccc}
			\hline 
			n & nnztot& nnzx&nb-iter &  time&nb-failure\\
			\hline 
			\vspace{0.1cm}
			32  &1 &1 & 960   & 0.4287  & 1  \\
			
			64  & 1&1 & 1032  & 0.8351  &  1    \\
			
			128 &3 & 1& 1478 & 1.6692    &  3   \\
			
			256 &1 &1 &  1996 & 18.3965 &   1   \\
			\hline 
		\end{tabular}
	\end{center}
\end{table}
\begin{table}[H]
	\caption{Results from CMM on  with $ 100$ consecutive random AVEs}
	\begin{center}
		\begin{tabular}{cccccc}
			\hline 
			n &nnztot&nnzx& nb-iter &  time &nb-failure\\
			\hline 
			\vspace{0.1cm}
			32  & 13&1 &640    & 4.2832   &  13 \\
			
			64  &11 &1 & 588  & 7.0034  &   11   \\
			
			128 &13& 1& 693 & 19.9940    &  13   \\
			
			256 &15& 1&  753& 143.6931 &  15  \\
			\hline 
		\end{tabular}
	\end{center}
\end{table}
\begin{table}[H]
	\caption{Results from LPM on  with $ 100$ consecutive random AVEs}
	\begin{center}
		\begin{tabular}{cccccc}
			\hline 
			n & nnztot&nnzx& nb-iter &  time&   nb-failure\\
			\hline 
			\vspace{0.1cm}
			32  &8&1 & 313  & 2.2422 &  8  \\
			
			64  &19&4& 411  & 6.0978  &   18  \\
			
			128 &21&3& 433 & 18.2642    &  20   \\
			
			256 &29&5&  606 & 156.4612  &  22   \\
			\hline 
		\end{tabular}
	\end{center}
\end{table}
In every cases our methods manage to reduce the number of unsolved problem, which was our principal aim. In every case it gives the smallest number of unsolved problem in a very reasonable time.
\section{Conclusion}
In this paper, we propose two methods to solve the LCP. A complete analysis is provided to validate our approach.
Furthermore, a numerical study shows that our approach is interesting. Numerical experiments on several LCP problems and a comparison with some existing methods proves the efficiency of our study.\\
We have presented an application of absolute value equation AVE and two examples (an obstacle problem and ODE) and show that our two methods are promising.
\vskip 6mm


\begin{thebibliography}{99}
	\bibitem{Lina Abdallah} L. Abdallah, M. Haddou, and T. Migot, Solving absolute value equation using complementarity and smoothing functions. \emph{J. Comput. Optim. Appl.}  $ \mathbf{327} $ (2018), 196-207.
	
	
	\bibitem{Billups}
	S. C. Billups, P. S. Dirkse and M. C. Ferris, A comparison of large scale mixed complementarity problem solvers. \emph{J. Comput. Optim. Appl.} $ \mathbf{7} $ (1997) 3–25.
	\bibitem{Chen}
	X. Chen, Smoothing methods for complementarity problems and their applications a survey. \emph{J. Oper. Res. Soc. Jpn.} $ \mathbf{43} $ (2000) 32–47. 
	
	\bibitem{Soft} Y. Chen, Smoothing for nonsmooth optimization. ELE522: Large-Scale Optimization for Data Science. Princeton University. 
	\bibitem{Cottle} R. Cottle, J.S. Pang and R. Stone, The linear complementarity problem. \emph{In Classics in Applied Mathematics. SIAM, Philadephia}. $ \mathbf{60} $ (2009).
	\bibitem{CottleR} R. Cottle and G. Dantzig, Complementarity pivot theory of mathematcal programming. \emph{Linear Algebra Appl.} $ \mathbf{1} $ (1968) 103-125.
	\bibitem{El Ghami} M. El Ghami,  New primal-dual interior-point methods based on kernel functions. PhD thesis, 2005.
	
	
	\bibitem{Facchinei} F. Facchinei and  J.S. Pang, Finite-Dimensional Variational Inequalities and Complementarity Problems. \emph{Spr. Oper. Res.} $ \mathbf{2} $ (2003).
	\bibitem{Ferris} M.C. Ferris and  J.S. Pang, Engineering and economic applications of complementarity problems.\emph{SIAM Rev.}  $ \mathbf{39} $ (1997) 669–713.
	
	\bibitem{G. Gu} G. Gu, Full-step interior-point methods for symmetric optimization. PhD thesis. 2009.
	\bibitem{Gurobi} Inc. Gurobi Optimization. Gurobi optimizer reference manual, 2015. Available from:http:// www.gurobi.com.
	
	\bibitem{Geiger}
	C. Geiger and C. Kanzow, On the resolution of monotone complementarity problems. \emph{Comput. Optim. Appl.}  $ \mathbf{5} $ (1996) 155–173.
	
	
	
	
	\bibitem{M. Haddou and P. Maheux} M. Haddou and P. Maheux, Smoothing methods for nonlinear complementarity
	problems. \emph{J. Optim. Theory. Appl.} $ \mathbf{3} $ (2014) 711-729.
	\bibitem{HADDOU}  M. Haddou, A New Class of Smoothing Methods for Mathematical Programs With Equilibrium Constraints. \emph{Pacific Journal of Optimization.}  $ \mathbf{5} $  (2009) 87-95.  .
	\bibitem{Karmarkar} N. Karmarkar, A new polynomial-time algorithm for linear programming. Combinatorica $ \mathbf{4} $ (1984) 373-395.
	\bibitem{KOSTREVA}  M. KOSTREVA, Direct algorithms for complementarity problems. PhD thesis, Rensselaer Polytechnic Institute, Troy, New York. 1976.	
	\bibitem{Kojima} M. Kojima, N. Megiddo, T. Noma, and A. Yoshise, A unified approach to interior point algorithms for linear complementarity problems. \emph{Springer Science and Business Media.}  $ \mathbf{538} $(1991).
	\bibitem{Lemke} C. Lemke, Bimatrix equilibrium points and mathematical programming. \emph{Manag. Sci.}  $ \mathbf{11} $(1965) 681-689.
	\bibitem{ill-posed}
	O. L. Mangasarian, The ill-posed linear complementarity problem. 1995.
	
	\bibitem{O.L.Mangasarian} O.L. Mangasarian, Linear complementarity as absolute value equation solution. \emph{Optimization Letters.} $ \mathbf{8} $ (2014) 1529-1534.
	\bibitem{O.L. Mangasarian1} O. L. Mangasarian, Absolute value equation solution via linear programming. \emph{J. Optim. Theory. Appl.}   $ \mathbf{161} $ (2014) 870-876.
	\bibitem{Olvi L. Mangasarian} O. L. Mangasarian, A generalized Newton method for absolute value equations. \emph{Optimization Letters.} $ \mathbf{3} $ (2009) 101-108.
	\bibitem{O. L. Mangasarian2} O. L. Mangasarian, Absolute value equation solution via concave minimization. \emph{Optimization Letters.}  $ \mathbf{1} $ (2007) 3-8.
	\bibitem{Matlab} MATLAB. version R2020. Natick, Massachusetts: The Math Works Inc, 2019.
	\bibitem{T. Migot} T. Migot, Contribution aux méthodes numériques pour les problèmes de complémentarité et problèmes d'optimisation sous contraintes de complémentarité, Ph.D. thesis. INSA Rennes. 2017.
	\bibitem{MIFFLIN}   R. MIFFLIN, Semismooth and semiconvex functions in constrained optimization. \emph{SIAM Journal on Control and Optimization.} $ \mathbf{15} $ (1977) 957–972.
	
	
	
	
	
	
	\bibitem{Murty} K. Murty, Linear Complementarity, Linear and Nonlinear Programming. Heldermann Verlag, Berlin, 1998.
	
	
	\bibitem{Pang}
	J. S. Pang, Complementarity problems. In:  R. Horst, P. Pardalos,  (eds.) Handbood of Global Optimization. Kluwer Academic, Boston (1995)
	
	
	\bibitem{Samelson} H. Samelson, R. Thrall and O. Wesler, A partition theorem for the Euclidean n-space. \emph{Proc. Am. Math. Soc.}  $ \mathbf{9} $ (1958) 805-807.
	\bibitem{Tran} D. Thach, I.B. Gharbia, M. Haddou and Q.H. Tran, A new approach for solving nonlinear algebraic systems with complementarity conditions application to compositional multiphase equilibrium, Preprint sumbmitted to some Elsevier journal 09/2020, in press.
	
	
	\bibitem{Verlag}
	H. Verlag, Linear Complementarity, Linear and Nonlinear Programming, Berlin, 1988.
	
	
\end{thebibliography}
\end{document}